\newtheorem{theorem}{Theorem}[section]
\newtheorem{lemma}[theorem]{Lemma}
\newtheorem{proposition}[theorem]{Proposition}
\newtheorem*{claim*}{Claim}
\theoremstyle{definition}
\newtheorem{definition}{Definition}
\newtheorem{construction}{Construction}
\newtheorem{remark}[theorem]{Remark}
\newcommand{\purple}[1]{\textcolor{black}{#1} }
\newcommand{\Z}{\mathbb{Z}}
\newcommand{\Q}{\mathbb{Q}}
\newcommand{\bsm}{\left(\begin{smallmatrix}}
\newcommand{\esm}{\end{smallmatrix}\right)}
\newcommand{\id}{\operatorname{Id}}
\newcommand{\Bl}{\operatorname{Bl}}
\newcommand{\coker}{\operatorname{coker}}
\newcommand{\Homeo}{\operatorname{Homeo}}
\newcommand{\Aut}{\operatorname{Aut}}
\newcommand{\hAut}{\operatorname{hAut}}
\newcommand{\PD}{\operatorname{PD}}
\newcommand{\ev}{\operatorname{ev}}
\newcommand{\quadr}{\operatorname{q}}
\newcommand{\Hom}{\operatorname{Hom}}
\newcommand{\ord}{\operatorname{ord}}
\newcommand{\ks}{\operatorname{ks}}
\newcommand{\st}{\operatorname{st}}
\newcommand{\sm}{\setminus}
\newcommand{\ol}{\overline}
\newcommand{\wt}{\widetilde}
\DeclareMathOperator{\spin}{spin}
\newcommand{\smfrac}[2]{\mbox{\footnotesize$\displaystyle\frac{#1}{#2}$}} 
\DeclareSymbolFont{EulerScript}{U}{eus}{m}{n}
\DeclareSymbolFontAlphabet\mathscr{EulerScript}
\begin{document}

\title{Infinite homotopy stable class for 4-manifolds with boundary}

\author[A.~Conway]{Anthony Conway}
\address{Massachusetts Institute of Technology, Cambridge MA 02139, United States}
\email{anthonyyconway@gmail.com}
\author[D.~Crowley]{Diarmuid Crowley}
\address{School of Mathematics and Statistics, University of Melbourne, Parkville,
\newline \indent VIC, 3010, Australia}
\email{dcrowley@unimelb.edu.au}
\author[M.~Powell]{Mark Powell}
\address{School of Mathematics and Statistics, University of Glasgow, United Kingdom}
\email{mark.powell@glasgow.ac.uk}

\def\subjclassname{\textup{2020} Mathematics Subject Classification}
\subjclass{Primary~57K40, 57R65. 
}
\keywords{Stable homeomorphism, homotopy equivalence,~$4$-manifold}

\begin{abstract}
We show that for every odd prime $q$, there exists an infinite family~$\{M_i\}_{i=1}^{\infty}$ of topological 4-manifolds that are all stably homeomorphic to one another, all the manifolds $M_i$ have isometric rank one equivariant intersection pairings
and boundary $L(2q, 1) \# (S^1 \times S^2)$, but they are pairwise not homotopy equivalent via any homotopy equivalence that restricts to a homotopy equivalence of the boundary.
\end{abstract}
\maketitle


\section{Introduction}
In what follows a manifold is understood to mean a compact, connected, oriented, topological manifold.
Let~$W_g := \#_g (S^2 \times S^2)$ be the~$g$-fold connected sum of~$S^2 \times S^2$ with itself.
Two $4$-manifolds~$M$ and~$N$ with the same Euler characteristic are \emph{stably homeomorphic},  denoted~$M \cong_{\st} N$,  if there exists a nonnegative integer~$g$ and a homeomorphism
\[M \# W_g \cong N \# W_g.\]

Surgery theory suggests two ways to classify 4-manifolds. The classical Browder-Novikov-Sullivan-Wall~\cite{WallSurgery} approach is to classify up to homotopy equivalence and then employ the surgery exact sequence. Kreck's modified surgery approach~\cite{KreckSurgeryAndDuality} seeks to classify up to stable homeomorphism, and then attempt to destabilise.  A natural question then arising is to compare the homotopy and stable classifications.
To do this precisely for 4-manifolds with boundary we fix a 4-manifold $M$ and define the \emph{homotopy stable class}:
\[ \mathcal{S}^{\st}_h(M) := \{N  \mid N \cong_{\st} M \}/\text{homotopy equivalence of pairs}.\]
Here,  we understand a \emph{homotopy equivalence of pairs}~$N_1 \simeq N_2$ to be one that restricts to a homotopy equivalence between the boundaries.
When the manifolds are closed, this recovers the usual notion of homotopy equivalence.

Using the equivariant intersection form~$\lambda_N$ of~$N$ as an invariant, ~$\mathcal{S}^{\st}_h(M)$ can be arbitrarily large: for example,  one can use Freedman's work to realise distinct positive definite symmetric bilinear forms with the same signature and rank by simply-connected closed~$4$-manifolds with identical Kirby-Siebenmann invariant~\cite{Freedman}.
For this reason,  we study the homotopy stable class one intersection form at a time and set
$$ \mathcal{S}^{\st}_{h,\lambda}(M) := \{N  \mid N \cong_{\st} M, \lambda_N \cong \lambda_M \}/\text{homotopy equivalence of pairs}.$$
If~$M$ is closed and has~$\pi_1(M)=1$, $\Z$, or $\Z/n$, or $\pi_1(M)$ is a solvable Baumslag-Solitar group, then~$|\mathcal{S}^{\st}_{h,\lambda}(M)|=1$: stably homeomorphic manifolds with isometric equivariant intersection forms are homeomorphic by work of Freedman for~$\pi_1=1$~\cite{Freedman}, Freedman-Quinn for~$\pi_1 \cong \Z$~\cite{FreedmanQuinn}, Hambleton-Kreck for $\pi_1 \cong \Z/n$~\cite[Theorem~C]{HK93}, and Hambleton-Kreck-Teichner for solvable Baumslag-Solitar group~\cite[Theorem~A]{HambletonKreckTeichner}.
On the other hand,  Kreck and Schafer found pairs of smooth closed~$4$-manifolds with finite $\pi_1$ and isometric equivariant intersection forms that are stably diffeomorphic but not homotopy equivalent~\cite{KreckSchafer}.
When the boundary is nonempty and~$\pi_1=1$,  one can use work of Boyer to produce simply-connected~$4$-manifolds~$M$ with boundary and arbitrarily large (but necessarily finite)~$\mathcal{S}^{\st}_{h,\lambda}(M)$~\cite{BoyerRealization}.
Until now however, there have been no examples of~$4$-manifolds with infinite~$\mathcal{S}^{\st}_{h,\lambda}(M)$.
For every odd prime $q$,  our main result describes a $4$-manifold $M$ with fundamental group $\Z$ and infinite~$\mathcal{S}^{\st}_{h,\lambda_{2q}}(M)$, where the fixed Hermitian form is \[\lambda_{2q} \colon \Z[t^{\pm 1}] \times \Z[t^{\pm 1}] \to \Z[t^{\pm 1}];\; (x,y) \mapsto 2qx\overline{y}. \]

\begin{theorem}
\label{thm:Infinite}
For every odd prime $q$,  there exists an infinite family~$\{M_i\}_{i=1}^{\infty}$ of
  $4$-manifolds with fundamental group $\Z$ that are all stably homeomorphic, and all the manifolds $M_i$ have equivariant intersection pairing
isometric to~$\lambda_{2q}$ and boundary $L(2q, 1) \# (S^1 \times S^2)$, but they are pairwise not homotopy equivalent via any homotopy equivalence that restricts to a homotopy equivalence on the boundary.
In other words,
$$|\mathcal{S}_{h,\lambda_{2q}}^{\st}(M_1)| = \infty.$$
\end{theorem}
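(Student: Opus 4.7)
The plan is to build the family $\{M_i\}_{i=1}^\infty$ by a parametric surgery construction on a single base 4-manifold, verify stable homeomorphism for the family, and then distinguish the pair-homotopy classes by a secondary invariant of $(M_i,\partial M_i)$.

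A natural base manifold is the 4-manifold $M$ obtained by attaching a single 2-handle with framing $2q$ to $S^1 \times D^3$ along a nullhomotopic unknot contained in a 3-ball inside $S^1 \times S^2$: a direct handle-body computation gives $\pi_1(M) = \Z$, equivariant intersection form isometric to $\lambda_{2q}$, and boundary $L(2q,1) \# (S^1 \times S^2)$. I would then produce the family by modifying $M$ via a satellite-type or infection construction along a nullhomotopic curve $\eta \subset M$, parameterised by knots $J_i$ chosen from an infinite family with pairwise inequivalent classical invariants (e.g.\ Blanchfield forms over $\Z[t^{\pm 1}]$, or Casson--Gordon-type discriminants). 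The construction should be arranged so as to preserve the fundamental group, the equivariant intersection form, the Kirby--Siebenmann invariant, and the boundary; a boundary-respecting version of Kreck's modified surgery classification then guarantees that all the $M_i$ are stably homeomorphic.

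The crucial step -- and the main obstacle -- is to distinguish the $M_i$ up to pair-homotopy equivalence. My approach is to exploit the fact that the $\Z/2q$-torsion generator of $H_1(\partial M_i)$ is nullhomologous in $M_i$ (because $H_1(M_i) \cong \Z$ is torsion-free), so it bounds an oriented surface in $M_i$. From such a bounding surface one can extract a secondary obstruction, for instance a Casson--Gordon-type discriminant or a twisted Blanchfield form on a suitable subquotient of $H_*(M_i;\Z[t^{\pm 1}])$, designed to depend on the infection knot $J_i$ while being insensitive to pair-homotopy equivalence of $(M_i,\partial M_i)$. Establishing that such an invariant is manifestly a pair-homotopy invariant -- not merely a homeomorphism or stable-homeomorphism invariant -- while still separating infinitely many $J_i$ is the hard part, and I expect it to rely on a chain-level construction over $\Z[t^{\pm 1}]$ combined with Poincar\'e--Lefschetz duality on $(M,\partial M)$, followed by an explicit computation showing that the invariant takes infinitely many distinct values on the parametric family.
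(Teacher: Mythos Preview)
Your base manifold is correct --- attaching a $2q$-framed 2-handle to $S^1\times D^3$ along an unknot in a ball is exactly the paper's $M=X_{2q}(U)\natural(S^1\times D^3)$ --- and invoking Kreck for stable homeomorphism is also right. But both the construction of the family and the distinguishing invariant have genuine gaps.

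\textbf{Construction.} Infection along a nullhomotopic curve $\eta\subset M$ is not going to do what you want. Such a curve has tubular neighbourhood $S^1\times D^3$, and the candidate replacements (regluings by elements of the mapping class group of $S^1\times S^2$, or splicing in a knot exterior crossed with $S^1$) either leave the pair-homotopy type untouched or disturb the data you are trying to preserve. You stipulate that the boundary, $\pi_1$, $\lambda_N$, and $\ks$ are all unchanged; but then $\pi_2(N)$, the equivariant intersection form, and the boundary are all fixed, and it is not at all clear any pair-homotopy invariant is moving. The paper does not build the $M_i$ by any geometric modification of $M$: it invokes a surgery-theoretic \emph{realisation} theorem (from \cite{ConwayPiccirilloPowell}) that produces, for each element of a certain algebraic orbit set, a 4-manifold with the prescribed data. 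These manifolds are a priori only topological.

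\textbf{Invariant.} A Casson--Gordon discriminant or a Blanchfield form of an infecting knot $J_i$ cannot work as stated, because the boundary (and hence its Blanchfield form) is the same for every $M_i$. The invariant that actually distinguishes the $M_i$ is the \emph{presentation} of $\Bl_{\partial M}$ by $\lambda_{M_i}$: concretely, each $M_i$ gives an isometry $D_{M_i}\colon \partial\lambda_{M_i}\cong -\Bl_{\partial M_i}$, and after transporting to $\partial M$ via a boundary identification this determines a class
\[
b(M_i)\;\in\;\Aut(\Bl_{\partial M},\mu_{\Bl_{\partial M}})\big/\bigl(\Aut(\lambda_M)\times \hAut^{+,\mathrm{q}}_\varphi(\partial M)\bigr).
\]
This is manifestly a pair-homotopy invariant (no chain-level work or Casson--Gordon machinery is needed). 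The substance of the argument then becomes purely algebraic: one computes $\Aut(\partial\lambda_{2q})/\Aut(\lambda_{2q})\cong U(\Z[t^{\pm1}]/2q)/U(\Z[t^{\pm1}])\cong\Z$ via $n\mapsto (q{-}1)t^n+q$, and then shows --- using the structure of $\pi_1(L(2q,1))\ast\Z$ and the Kurosh subgroup theorem --- that self-homotopy-equivalences of $\partial M$ act on $H_1(\partial M;\Z[t^{\pm1}])$ only by $\pm t^k$, hence trivially on the quotient. Your proposal does not isolate this orbit set, and without it there is no mechanism linking the construction to the obstruction.
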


For a fixed odd prime $q$, the manifolds in Theorem~\ref{thm:Infinite} all have fundamental group $\Z$,
boundary~\[Y_q := L(2q,1) \# (S^1 \times S^2),\]  equivariant intersection form
isometric to $\lambda_{2q}$, and integral intersection form isometric to
\[\lambda^\Z_{2q} \colon \Z \times \Z \to \Z;\; (x,y) \mapsto 2qxy,\]
but are distinguished by an invariant, first introduced in~\cite{ConwayPiccirilloPowell} and inspired by work of Boyer~\cite{BoyerRealization}, related to the Blanchfield form of~$Y_q$.
While the manifold~$M_1$ is smooth,  we cannot tell whether any of the other $M_i$ admit smooth structures.  Their construction uses surgery methods, in particular a recent realisation result from \cite{ConwayPiccirilloPowell}, which a priori only works in the topological category.


Before giving more details and describing the main steps in the proof of Theorem~\ref{thm:Infinite}, we briefly compare the study of the homotopy stable class in dimension~$4$  with the situation in higher dimensions.

\begin{remark}
\label{rem:HigherDimension}
Kreck and Schafer found pairs of closed smooth~$4k$-manifolds, for~$k \geq 1$, that are stably diffeomorphic and have hyperbolic equivariant intersection forms,  but are pairwise not homotopy equivalent~\cite{KreckSchafer}.
In \cite{CCPS-short}, together with Sixt we gave the first examples of simply-connected, closed,  smooth~$4k$-manifolds, for~$k \geq 2$, with hyperbolic intersection form and arbitrarily large homotopy stable class $\mathcal{S}^{\st}_{h,\lambda}$.
 In \cite{CCPS-long} for $k \geq 2$, we produced smooth closed~$4k$-manifolds with fundamental group~$\Z$, again with hyperbolic intersection form, and such that the homotopy stable class $\mathcal{S}^{\st}_{h,\lambda}$ is infinite.
In those papers we were unable to obtain examples in dimension $4$.
In \cite{CCPS-short}, in lieu of this we defined a $\spin^{c}$ version of the stable class in dimension $4$, and we showed that this spin$^c$ stable
class can be arbitrarily large.
This article shows that a variation on those methods,
with analogous underlying algebra, does produce examples of 4-manifolds with nonempty boundary and fundamental group $\Z$ that have infinite homotopy stable class.
\end{remark}

Next we describe the main steps in the proof of Theorem~\ref{thm:Infinite}.
%
%
Fix an odd prime $q$.
The first observation
is that if~$N_1,N_2$ are~$4$-manifolds with
integral intersection forms
isometric to $\lambda^\Z_{2q}$,
then there can be no orientation-reversing homotopy equivalence between~$N_1$ and~$N_2$.
For this reason,  and for the purpose of proving our main theorem, we restrict to orientation-preserving homotopy equivalences (o.p.  homotopy eq.  for short) and therefore consider
$$\mathcal{S}^{\st}_{h^+,\lambda}(M) := \{N  \mid N \cong_{\st} M, \lambda_N \cong \lambda_M \}/\text{o.p.\ homotopy eq.\ of pairs}.$$
We now restrict to $4$-manifolds $M$ with fundamental group~$\Z$ such that the inclusion $\partial M \subseteq M$ induces a surjection $\varphi \colon \pi_1(\partial M) \twoheadrightarrow \pi_1(M) \xrightarrow{\cong} \Z$  (we say that $M$ has \emph{ribbon boundary}) and for which $H_1(\partial M;\Z[t^{\pm 1}])$ is a $\Z[t^{\pm 1}]$-torsion module.
\purple{Here and throughout the paper we assume that the fundamental groups of our 4-manifolds are equipped with a preferred isomorphism to $\Z$; to indicate this we write~$\pi_1(M) = \Z$.}

\purple{Given two such manifolds $N_1$ and $N_2$, we write $\partial N_1 \cong_B  \partial N_2$ if there exists an orientation-preserving homeomorphism $f \colon \partial N_1 \xrightarrow{\cong} \partial N_2$  that interwines the inclusion induced epimorphisms $\varphi_i \colon \pi_1(\partial N_i) \twoheadrightarrow \pi_1(N_i)$ and,  in the case that $N_1$ and $N_2$ are  spin,  such that the union $N_1 \cup_f -N_2$  is spin.}
The terminology $\cong_B$ is motivated by modified surgery theory~\cite{KreckSurgeryAndDuality}, in which $B$ is the standard notation for the normal 1-type.


Next,  if~$N_1$ and $N_2$ have fundamental group~$\Z$, $\partial N_1 \cong_{\purple{B}} \partial N_2$, the same Kirby-Siebenmann invariant, and~$\lambda_{N_1} \cong \lambda_{N_2}$, then they are stably homeomorphic. Indeed,~$N_1$ and~$N_2$ must have isometric integral intersection forms (in particular with the same type and the same signature) and the same Kirby-Siebenmann invariant, so work of Kreck ensures they are stably homeomorphic~\cite[Theorem 2]{KreckSurgeryAndDuality}\purple{; see Lemma~\ref{lem:Kreck} for details.}

Put differently,  if~$M$ is a~$4$-manifold with infinite cyclic fundamental group, then
\[  \mathcal{S}^{\st}_{h^+,\lambda}(M)= \frac{\{N  \mid \partial N \cong_{\purple{B}} \partial M, \pi_1(N) = \Z,  \lambda_N \cong \lambda_M, \ks(N)=\ks(M) \}}{\text{o.p.~homotopy eq. of pairs}}.\]
This next step is to recast~$\mathcal{S}^{\st}_{h^+,\lambda}(M)$ in terms of the group~$\Aut(\Bl_{\partial M})$ of isometries of the Blanchfield form~$\Bl_{\partial M}$ \purple{(whose definition we recall in Section~\ref{sec:Blanchfield}}).
Firstly,  as we recall in Section~\ref{sec:Reduc},  the group~$\hAut_{\varphi}^+(\partial M)$ of orientation-preserving homotopy equivalences~$h \colon \partial M \simeq \partial M$ that intertwine the inclusion induced map~$\varphi \colon \pi_1(\partial M) \to \pi_1(M)=\Z$ acts on~$\Aut(\Bl_{\partial M})$.
Secondly,  as we also recall in Section~\ref{sec:Reduc},  the group~$\Aut(\lambda_M)$ of isometries of~$\lambda_M$ also acts on~$\Aut(\Bl_{\partial M})$, and the two actions commute with one another.
Quotienting out by these two actions leads to an orbit set~$\Aut(\Bl_{\partial M})/(\Aut(\lambda_M) \times \hAut_{\varphi}^+(\partial M))$.
Note that it need not be group.

In order to account for our $4$-manifolds being spin,  we will in fact need to work with a smaller set of isometries.
Namely,  if $M$ is spin, then $\Bl_{\partial M}$ admits a quadratic enhancement
$$ \mu_{\Bl_{\partial M}} \colon H_1(\partial M;\Z[t^{\pm 1}]) \to \frac{\{b \in \Q(t) \mid b= \ol{b}\}}{\{a + \ol{a} \mid a \in \Z[t^{\pm 1}]\}}$$
and we write $\Aut(\Bl_{\partial M}, \mu_{\Bl_{\partial M}}) \subseteq \Aut(\Bl_{\partial M})$ for those isometries of $\Bl_{\partial M}$ that also preserve~$\mu_{\Bl_{\partial M}}$.
Writing $\hAut_{\varphi}^{+,\purple{\quadr}}(\partial M)$ for those homotopy equivalences whose induced map on the Alexander module preserves $\mu_{\Bl_{\partial M}}$ leads to the orbit set
$$\Aut(\Bl_{\partial M}, \mu_{\Bl_{\partial M}})/(\Aut(\lambda_M) \times \hAut_{\varphi}^{+,\quadr}(\partial M)).$$
One of the main steps in the proof of Theorem~\ref{thm:Infinite} is the following partial description of~$\mathcal{S}^{\st}_{h^+,\lambda}(M)$ for a large class of~$4$-manifolds~$M$ with infinite cyclic fundamental group and \purple{ribbon boundary.}
As we will explain in Proposition~\ref{prop:SurjectionOntoAlgebra}, this result follows fairly promptly from the machinery developed in~\cite{ConwayPiccirilloPowell}.
\purple{In the following proposition, and throughout the paper, \emph{spin} refers to a manifold that admits a spin structure compatible with the orientation.}

\begin{proposition}
\label{prop:SurjectionOntoAlgebraIntro}
If~$M$ is a \purple{spin}~$4$-manifold with ribbon boundary,~$\pi_1(M)= \Z$, and nondegenerate equivariant intersection form~$\lambda_M$, then there is a surjection
\color{black}
$$ b\colon  \mathcal{S}_{h^+ ,\lambda}^{\st}(M) \twoheadrightarrow  \Aut(\Bl_{\partial M},\purple{\mu_{\Bl_{\partial M}}})/(\Aut(\lambda_M)\times \hAut_{\varphi}^{+,\purple{\quadr}}(\partial M)).$$
\color{black}
The surjection is described explicitly in Construction~\ref{cons:CPP22}.
\end{proposition}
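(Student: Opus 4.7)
The plan is to define $b$ directly from the boundary link pairing relating $\lambda_N$ and $\Bl_{\partial N}$, check well-definedness, and then appeal to the realisation machinery of~\cite{ConwayPiccirilloPowell} for surjectivity. Given $[N] \in \mathcal{S}^{\st}_{h^+,\lambda}(M)$, the stable classification (Lemma~\ref{lem:Kreck}) together with the hypothesis $\lambda_N \cong \lambda_M$ forces $\partial N \cong_B \partial M$, so I may choose an orientation-preserving homeomorphism $f \colon \partial N \to \partial M$ intertwining the boundary epimorphisms to $\Z$ and matching spin structures, and an isometry $\Psi \colon \lambda_N \to \lambda_M$. Since $\lambda_N$ is nondegenerate, its adjoint has cokernel canonically isomorphic to $H_1(\partial N; \Z[t^{\pm 1}])$, and $\Bl_{\partial N}$ can be read off from a presentation of $\lambda_N$ (cf.~Section~\ref{sec:Blanchfield}); comparing the two resulting identifications $\Psi_\partial, f_\ast \colon \Bl_{\partial N} \xrightarrow{\cong} \Bl_{\partial M}$ yields the element
\[ b(N,f,\Psi) := \Psi_\partial \circ f_\ast^{-1} \in \Aut(\Bl_{\partial M}). \]
Because $N$ is spin and $f$ implements a $\cong_B$-equivalence, this isometry automatically preserves the quadratic enhancement $\mu_{\Bl_{\partial M}}$, so $b(N,f,\Psi) \in \Aut(\Bl_{\partial M},\mu_{\Bl_{\partial M}})$. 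This is the content of Construction~\ref{cons:CPP22}.

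Next I would check well-definedness. Any two choices of $\Psi$ differ by post-composition with an element of $\Aut(\lambda_M)$, giving the first factor of the stabiliser action. The induced map $f_\ast$ on $H_1(\partial N; \Z[t^{\pm 1}])$ depends only on the homotopy class of $f$, so varying $f$ within its $\cong_B$ class produces pre-composition by an element of $\hAut_\varphi^{+,\quadr}(\partial M)$, giving the second factor; the two actions commute. Finally, an orientation-preserving homotopy equivalence of pairs $g \colon N_1 \simeq N_2$ transports any admissible data $(f_2,\Psi_2)$ for $N_2$ to admissible data $(f_2 \circ g|_{\partial N_1}, \Psi_2 \circ g_\ast)$ for $N_1$, producing the same element of the orbit set. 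Hence $b$ descends to a well-defined map on $\mathcal{S}^{\st}_{h^+,\lambda}(M)$.

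For surjectivity, I would invoke the realisation theorem of~\cite{ConwayPiccirilloPowell}: every $\psi \in \Aut(\Bl_{\partial M}, \mu_{\Bl_{\partial M}})$ is realised by a $4$-manifold $N_\psi$ with $\pi_1(N_\psi) = \Z$, ribbon boundary identified with $\partial M$, equivariant intersection form isometric to $\lambda_M$, and the same Kirby-Siebenmann invariant as $M$. Lemma~\ref{lem:Kreck} then gives $N_\psi \cong_{\st} M$, so $[N_\psi] \in \mathcal{S}^{\st}_{h^+,\lambda}(M)$ and by construction $b([N_\psi]) = [\psi]$ in the quotient. The main obstacle is precisely this realisation step: simultaneously prescribing fundamental group, equivariant intersection form, ribbon boundary, and Kirby-Siebenmann invariant while realising an arbitrary boundary isometry requires the full topological surgery framework of~\cite{ConwayPiccirilloPowell} and is not available in the smooth category; by contrast, the construction of $b$ and its descent to the homotopy set are essentially formal.
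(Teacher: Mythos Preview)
Your outline follows the paper's approach closely, but there is a genuine gap in the surjectivity step. To invoke Lemma~\ref{lem:Kreck} and conclude $N_\psi \cong_{\st} M$, you need $\partial N_\psi \cong_B \partial M$, i.e.\ the boundary homeomorphism $g$ produced by the realisation theorem of~\cite{ConwayPiccirilloPowell} must make $M \cup_g -N_\psi$ spin. The realisation theorem only furnishes a homeomorphism of boundaries intertwining the maps to $\Z$; it does not assert $B$-compatibility. The paper closes this gap as follows: since the realised element is $\psi = g_* \circ D_{N_\psi} \circ \partial F$ with $\psi \in \Aut(\Bl_{\partial M},\mu_{\Bl_{\partial M}})$, and since $D_{N_\psi}$ and $\partial F$ always preserve the quadratic refinements, it follows that $g_*$ does as well. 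Proposition~\ref{prop:CPPStablyHomeo} (via Proposition~\ref{prop:CPPSpinUnion}) then yields that $M \cup_g -N_\psi$ is spin, so $g$ is $B$-compatible, and only now does Lemma~\ref{lem:Kreck} apply. This is exactly the point at which the quadratic, rather than merely symmetric, hypothesis on $\psi$ enters the argument; your write-up elides it.

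A related issue arises in your well-definedness paragraph: you assert that because $f$ is $B$-compatible, the induced isometry ``automatically'' preserves $\mu_{\Bl_{\partial M}}$. This is the converse implication, and while it is in fact true (running the computation in the proof of Lemma~\ref{lem:UnionEven} backwards: if the union form is even then $\mu_\partial(\pi_0(x_0)) - \mu_\partial(h\circ\pi_0(x_0))$ vanishes in $Q_1(\Z[t^{\pm 1}],S)$ for all $x_0$), it is not stated in the paper and you should justify it. The paper avoids this by instead choosing $g$ to be the boundary restriction of a stable homeomorphism $M\# W_r \cong N\# W_r$; such a $g$ manifestly intertwines the quadratic refinements because stabilising by $\bsm 0&1\\1&0\esm$ leaves the boundary quadratic linking form unchanged.
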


Fix an odd prime $q$ and let~$X_{2q}(U)$ denote the~$2q$-trace on the unknot~$U$, i.e.\ the smooth~$4$-manifold obtained from~$D^4$ by attaching a~$2q$-framed~$2$-handle along the unknot.
The final part of the proof of Theorem~\ref{thm:Infinite}, which is carried out in Proposition~\ref{thm:InfiniteAlgebra}, consists of proving that for~$M=X_{2q}(U) \natural (S^1 \times D^3)$, the set
$\purple{\Aut (\Bl_{\partial M}, \mu_{\Bl_{\partial M}})/( \Aut(\lambda_M) \times \hAut_{\varphi}^{+,\quadr}(\partial M))}$
is countably infinite.
Modulo this statement, we can now conclude the proof of Theorem~\ref{thm:Infinite}, which states that~$\mathcal{S}_{h,\lambda_{2q}}^{\st}(M)$ is infinite.

\begin{proof}[Proof of Theorem~\ref{thm:Infinite}]
Fix an odd prime $q$ and
consider~$M:=X_{2q}(U) \natural (S^1 \times D^3)$.
\purple{This $4$-manifold is spin, has ribbon boundary,  admits an identification $\pi_1(M) = \Z$, and has nondegenerate equivariant intersection $\lambda_M\cong (2q)$.}
Since for any two 4-manifolds~$N_1$ and $N_2$ with 
integral intersection forms isometric to $\lambda^\Z_{2q}$,
there is no orientation reversing homotopy equivalence between them,
$\mathcal{S}_{h,\lambda_{2q}}^{\st}(M_{})=\mathcal{S}_{h^+,\lambda_{2q}}^{\st}(M_{})$.
We therefore prove that~$\mathcal{S}_{h^+,\lambda_{2q}}^{\st}(M_{})$ is infinite.
To prove this we apply Proposition~\ref{prop:SurjectionOntoAlgebraIntro}, which implies
that~$\mathcal{S}_{h^+,\lambda_{2q}}^{\st}(M_{})$ surjects
onto \purple{the orbit set}~$\purple{\Aut (\Bl_{\partial M}, \mu_{\Bl_{\partial M}})/( \Aut(\lambda_M) \times \hAut_{\varphi}^{+,\quadr}(\partial M))}$
and this latter set is countably infinite by Proposition~\ref{thm:InfiniteAlgebra}.
\end{proof}

\begin{remark}
\label{rem:Finite}
The existence of $M$ with infinite $\Aut(\Bl_{\partial M},\mu_{\Bl_{\partial  M}})/ (\Aut(\lambda_M) \times \hAut_{\varphi}^{+,\purple{\quadr}}(\partial M))$ is what makes it possible for us to obtain an example where the homotopy stable class $\mathcal{S}_{h,\lambda}^{\st}$ is infinite.
While an analogue of Proposition~\ref{prop:SurjectionOntoAlgebraIntro} can be proved in the simply-connected case using results of Boyer~\cite{BoyerRealization},  the corresponding algebra always remains finite for trivial fundamental group.

All of the infinite sets we discuss are necessarily countable.  Primarily, this has to be the case because there are only countably many compact manifolds~\cite{CheegerKister}.
 On the algebraic side it is also evident that the orbit set onto which the homotopy stable class surjects in Proposition~\ref{prop:SurjectionOntoAlgebraIntro} is countable, essentially because all the homology groups involved are finitely generated over $\Z[t^{\pm 1}]$.
\end{remark}

Next we  discuss a variation on Proposition~\ref{prop:SurjectionOntoAlgebraIntro} that may be of independent interest.
%
%
The surjection in Proposition~\ref{prop:SurjectionOntoAlgebraIntro} can be improved to a bijection if we require the homotopy equivalences~$N_1 \simeq N_2$ to restrict to homeomorphisms on the boundary; i.e.\ if we consider
\[ \mathcal{S}^{\st,\partial}_{h^+,\lambda}(M) := \frac{ \{N  \mid N \cong_{\st} M,\,  \lambda_N \cong \lambda_M \}}{\text{o.p.\ homotopy eq.\ that restricts to a homeo.\  on the boundary}}\]
and change the target accordingly, i.e.\ consider~$\Aut(\Bl_{\partial M},\mu_{\Bl_{\partial M}})/(\Aut(\lambda_M) \times \Homeo_\varphi^{+,\purple{\quadr}}(\partial M))$ instead of~$\Aut(\Bl_{\partial M},\mu_{\Bl_{\partial M}})/(\Aut(\lambda_M) \times \hAut_{\varphi}^{+,\quadr}(\partial M))$.
In fact, the same result is obtained with
\[  \mathcal{S}^{\st}_{+,\lambda}(M) := \frac{ \{N  \mid  N \cong_{\st} M,\,  \lambda_N \cong \lambda_M\}}{\text{o.p. homeomorphism}}.\]

%
%
%

\begin{proposition}
\label{prop:Bijection}
 If~$M$ is a \purple{spin}~$4$-manifold with~$\pi_1(M) = \Z$,  ribbon boundary and nondegenerate equivariant intersection form~$\lambda_M$,  then there are bijections:
\begin{align*}
\mathcal{S}_{h^+,\lambda}^{\st,\partial}(M) &\xrightarrow{\approx}  \Aut(\Bl_{\partial M},\purple{\mu_{\Bl_{\partial M}}})/(\Aut(\lambda_M)\times \Homeo_\varphi^{+,\purple{\quadr}}(\partial M)),\text{ and} \\
\mathcal{S}^{\st}_{+,\lambda}(M) &\xrightarrow{\approx}  \Aut(\Bl_{\partial M},\purple{\mu_{\Bl_{\partial M}}})/(\Aut(\lambda_M)\times \Homeo_\varphi^{+,\purple{\quadr}}(\partial M)).
\end{align*}
The bijections are induced by the map~$b$ that will be introduced in Construction~\ref{cons:CPP22}.
For $M:=X_{2q}(U) \natural (S^1 \times D^3)$, with $q$ an odd prime, the sets above are countably infinite.
\end{proposition}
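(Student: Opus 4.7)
The plan is to use the same map $b$ from Construction~\ref{cons:CPP22} for both bijections, observing that strengthening o.p.\ homotopy equivalence of pairs to ``restricting to a homeomorphism on the boundary'' (or further to o.p.\ homeomorphism of manifolds) reduces the ambiguity in the boundary identification that enters Construction~\ref{cons:CPP22} from $\hAut_\varphi^{+,\quadr}(\partial M)$ to $\Homeo_\varphi^{+,\quadr}(\partial M)$. Consequently $b$ descends to well-defined maps on both $\mathcal{S}^{\st}_{+,\lambda}(M)$ and $\mathcal{S}^{\st,\partial}_{h^+,\lambda}(M)$ landing in the finer orbit set, fitting into a commutative triangle with the natural forgetful surjection $\mathcal{S}^{\st}_{+,\lambda}(M)\twoheadrightarrow\mathcal{S}^{\st,\partial}_{h^+,\lambda}(M)$.

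My first step would be to prove that the map $b\colon\mathcal{S}^{\st}_{+,\lambda}(M)\to\Aut(\Bl_{\partial M},\mu_{\Bl_{\partial M}})/(\Aut(\lambda_M)\times\Homeo_\varphi^{+,\quadr}(\partial M))$ is a bijection. Surjectivity proceeds exactly as in Proposition~\ref{prop:SurjectionOntoAlgebraIntro}: for each orbit, the realisation result of~\cite{ConwayPiccirilloPowell} produces a 4-manifold $N$ with $b(N)$ equal to that orbit, and the boundary identification that is produced is a genuine homeomorphism, so $N$ represents a class in $\mathcal{S}^{\st}_{+,\lambda}(M)$. Injectivity is the main obstacle, and essentially the only new content beyond Proposition~\ref{prop:SurjectionOntoAlgebraIntro}. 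Given $N_1,N_2\in\mathcal{S}^{\st}_{+,\lambda}(M)$ with $b(N_1)=b(N_2)$ in the $\Homeo_\varphi^{+,\quadr}$-quotient, the $\Aut(\lambda_M)$ factor permits replacing the identification of the interior of $N_2$ by an isometry of $\lambda_M$, while the $\Homeo_\varphi^{+,\quadr}$ factor permits adjusting the boundary homeomorphism $\partial N_2\cong\partial M$; after these two adjustments the rebuilding data for $N_1$ and $N_2$ agree, and the uniqueness portion of the classification in~\cite{ConwayPiccirilloPowell} supplies an o.p.\ homeomorphism $N_1\cong N_2$. The delicate point will be tracking how boundary identifications, automorphisms of the interior intersection form, and the spin condition encoded by $\mu_{\Bl_{\partial M}}$ and $\cong_B$ interact throughout this argument.

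Once this bijection on $\mathcal{S}^{\st}_{+,\lambda}(M)$ is established, the second bijection is formal: the surjection $\mathcal{S}^{\st}_{+,\lambda}(M)\twoheadrightarrow\mathcal{S}^{\st,\partial}_{h^+,\lambda}(M)$ postcomposed with $b$ equals the bijection on $\mathcal{S}^{\st}_{+,\lambda}(M)$, which forces both this surjection and the induced map on $\mathcal{S}^{\st,\partial}_{h^+,\lambda}(M)$ to be bijections. Finally, for $M=X_{2q}(U)\natural(S^1\times D^3)$, the forgetful surjection
\[ \Aut(\Bl_{\partial M},\mu_{\Bl_{\partial M}})/(\Aut(\lambda_M)\times\Homeo_\varphi^{+,\quadr}(\partial M)) \twoheadrightarrow \Aut(\Bl_{\partial M},\mu_{\Bl_{\partial M}})/(\Aut(\lambda_M)\times\hAut_\varphi^{+,\quadr}(\partial M)) \]
has countably infinite target by Proposition~\ref{thm:InfiniteAlgebra}, so its source, which is our orbit set, is also countably infinite; countability is automatic, as noted in Remark~\ref{rem:Finite}.
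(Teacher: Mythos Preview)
Your proposal is correct and follows essentially the same approach as the paper: surjectivity as in Proposition~\ref{prop:SurjectionOntoAlgebra}, injectivity by invoking the classification theorem of \cite{ConwayPiccirilloPowell} (the paper cites Theorem~1.1 there directly) to upgrade $b(N_1)=b(N_2)$ to an o.p.\ homeomorphism $N_1\cong N_2$, and infiniteness via $\Homeo_\varphi^{+,\quadr}(\partial M) \subseteq \hAut_\varphi^{+,\quadr}(\partial M)$ together with Proposition~\ref{thm:InfiniteAlgebra}. The only cosmetic difference is that the paper handles both bijections at once (an o.p.\ homeomorphism being the strongest equivalence in play, injectivity for both sets follows immediately), whereas you first establish the bijection on $\mathcal{S}^{\st}_{+,\lambda}(M)$ and then deduce the other formally via the forgetful triangle.
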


\begin{proof}
The surjectivity follows from the same argument that we will use in Proposition~\ref{prop:SurjectionOntoAlgebra}. We prove injectivity.
 If~$b(N_1) = b(N_2)$, then~\cite[Theorem 1.1]{ConwayPiccirilloPowell} shows that the manifolds~$N_1$ and~$N_2$ are orientation-preserving homeomorphic.  Since the quotient with $\Homeo_\varphi^{+,\purple{\quadr}}(\partial M)$ replaced by $\hAut_\varphi^{+,\purple{\quadr}}(\partial M)$ is infinite, and since $\Homeo_\varphi^{+,\purple{\quadr}}(\partial M) \subseteq \hAut_\varphi^{+,\purple{\quadr}}(\partial M)$, it follows that the sets in the statement are infinite.
\end{proof}

We conclude the introduction by characterising~$M := X_{2q}(U) \natural (S^1 \times D^3)$ within $\mathcal{S}^{\st}_{+,\lambda}(M)$ in terms of the knottedness of the sphere $S^2_l := \lbrace \mathrm{pt}\rbrace \times S^2 \subseteq ((S^1 \times S^2) \setminus \mathrm{Int}(D^3)) \subseteq
\partial M$ and the connect sum sphere $S^2_c \subseteq M$.

\begin{theorem}
For $M =X_{2q}(U) \natural (S^1 \times D^3)$ and $N \in \mathcal{S}^{\st}_{+,\lambda}(M)$,  the following are equivalent:
\begin{enumerate}
\item $N$ is homeomorphic to $M$;
\item $S^2_l \subseteq \partial N$ bounds a locally flat $D^3 \subseteq N$;
\item $S^2_c$ bounds a locally flat $D^3 \subseteq N$.
\end{enumerate}
\end{theorem}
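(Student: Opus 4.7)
The implications $(1) \Rightarrow (2)$ and $(1) \Rightarrow (3)$ are immediate. In $M = X_{2q}(U) \natural (S^1 \times D^3)$, the sphere $S^2_l = \{\mathrm{pt}\} \times S^2$ bounds $\{\mathrm{pt}\} \times D^3$ inside the $S^1 \times D^3$ summand, and $S^2_c$ bounds the separating $3$-disk used to form the boundary connect sum; an orientation-preserving homeomorphism $N \cong M$ transports these locally flat disks into $N$.

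For $(2) \Rightarrow (1)$ the plan is to use a locally flat disk $D^3 \subseteq N$ with $\partial D^3 = S^2_l$ to exhibit $N$ as a boundary connect sum. A regular neighborhood $\nu(D^3) \cong D^3 \times [0,1]$ meets $\partial N$ in a collar $S^2 \times [0,1]$ of $S^2_l$, and setting $N' := N \setminus \mathrm{int}(\nu(D^3))$ gives a compact connected $4$-manifold whose boundary is obtained from $\partial N \cong L(2q,1) \# (S^1 \times S^2)$ by surgery on the non-separating sphere $S^2_l$, so $\partial N' \cong L(2q,1)$. Dually, $\nu(D^3)$ appears as a $4$-dimensional $1$-handle attached to $N'$ along two disks in $\partial N'$, whence $N \cong N' \natural (S^1 \times D^3)$. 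I would then check, via van Kampen, that $\pi_1(N') = 1$; that the splitting of the equivariant intersection form on $N$ forces $\lambda_{N'} \cong (2q)$; and that $\ks(N') = \ks(N) = \ks(M) = 0$ by additivity of $\ks$ under boundary connect sum with the smooth manifold $S^1 \times D^3$. The classification of simply connected topological $4$-manifolds with boundary $L(2q,1)$, due to Freedman and Boyer, then identifies $N' \cong X_{2q}(U)$, and hence $N \cong M$.

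For $(3) \Rightarrow (1)$ the argument is analogous but begins with a locally flat disk $D^3 \subseteq N$ with $\partial D^3 = S^2_c$. Since $S^2_c$ separates $\partial N$ and $H_3(N;\Z) = 0$, the disk $D^3$ separates $N$ and produces a boundary connect sum decomposition $N \cong N_1 \natural N_2$ with $\partial N_1 \cong L(2q,1)$ and $\partial N_2 \cong S^1 \times S^2$. Van Kampen together with the ribbon boundary condition $\pi_1(\partial N) \twoheadrightarrow \pi_1(N) = \Z$ forces $\pi_1(N_1) = 1$ and $\pi_1(N_2) = \Z$, and the splitting of the equivariant intersection form gives $\lambda_{N_1} \cong (2q)$ and $H_2(N_2;\Z[t^{\pm 1}]) = 0$. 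As before $N_1 \cong X_{2q}(U)$, and the classification of spin $4$-manifolds with $\pi_1 = \Z$, boundary $S^1 \times S^2$, trivial equivariant intersection form, and vanishing Kirby--Siebenmann invariant identifies $N_2 \cong S^1 \times D^3$, giving $N \cong M$.

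The hard part will be verifying that the classification theorems apply with the correct boundary identifications and spin structures in each case, particularly the identification $N_2 \cong S^1 \times D^3$. Should this prove delicate, an alternative route is to observe that in both (2) and (3) the resulting decomposition already puts $N$ in the form required by Proposition~\ref{prop:Bijection}, reducing the problem to showing that the associated class in $\Aut(\Bl_{\partial M}, \mu_{\Bl_{\partial M}})/(\Aut(\lambda_M) \times \Homeo_\varphi^{+,\quadr}(\partial M))$ is the identity orbit.
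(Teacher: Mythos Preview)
Your proposal is correct and follows essentially the same approach as the paper's proof: cut along the locally flat $D^3$ and identify the resulting piece(s) using Boyer's classification \cite[Theorem~0.1]{BoyerUniqueness} for the simply connected piece with boundary $L(2q,1)$, and Freedman--Quinn \cite[Section~11.6]{FreedmanQuinn} for the piece with $\pi_1=\Z$ and boundary $S^1 \times S^2$. The paper is terser and simply invokes these references directly, so your concern about the ``hard part'' is unwarranted and the alternative route via Proposition~\ref{prop:Bijection} is unnecessary.
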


\begin{proof}
The implications $1) \Rightarrow 2)$  and $1) \Rightarrow 3)$ are immediate.

We prove the implication $2) \Rightarrow 1)$.
Cut $N$ along the $D^3$ with boundary $S^2_l$ to obtain a simply connected $4$-manifold with boundary $L(2q,1)$ and
$H_2 = \Z$.
Work of Boyer implies that such a manifold is homeomorphic to $X_{2q}(U)$~\cite[Theorem 0.1]{BoyerUniqueness}.
Glue back the~$D^3 \times [0,1]$ that we removed to recover $N$ as $M$.

Finally, we prove the implication $3) \Rightarrow 1)$.
Cut $N$ open along the separating $D^3$, resulting in a disjoint union of two $4$-manifolds.
The first  is simply connected with $H_2 = \Z$ and boundary~$L(2q,1)$ and is therefore homeomorphic to $X_{2q}(U)$~\cite[Theorem 0.1]{BoyerUniqueness}.
The second has $\pi_1=\Z$, no~$H_2$ and boundary $S^1 \times S^2$; it is thus homeomorphic to~$S^1 \times D^3$~\cite[Section 11.6]{FreedmanQuinn}.
Glue back the~$D^3 \times [0,1]$ that we removed to recover~$N$ as $M$.
\end{proof}

\subsection*{Organisation}

\purple{In Sections~\ref{sec:LinkingForms} and \ref{sec:Blanchfield}, we review some facts about linking forms and in particular the Blanchfield form. In Section~\ref{sec:StableHomeo} we give a criterion that implies stable homeomorphism of $4$-manifolds with fundamental group $\Z$ and nonempty boundary.}
In Section~\ref{sec:Reduc}, we prove Proposition~\ref{prop:SurjectionOntoAlgebraIntro}. In Section~\ref{sec:InfinitebAut} we show that for~$M=X_{2q}(U) \natural (S^1 \times D^3)$,  with $q$ an odd prime, the set~$\Aut(\Bl_{\partial M},\mu_{\Bl_{\partial M}})/(\Aut(\lambda_M) \times \hAut_{\varphi}^{+,\purple{\quadr}}(\partial M))$ is infinite.

\subsection*{Conventions}
\label{sub:Conventions}

We work in the topological category  unless otherwise stated.
All manifolds are assumed to be compact, connected, based, and oriented. If a manifold has a nonempty boundary, then the basepoint is assumed to be in the boundary.  \purple{For a 4-manifold $M$ with fundamental group $\Z$, we fix an identification an write $\pi_1(M) =\Z$.  We say that $M$ is spin if $M$ admits a spin structure compatible with the orientation.}
\purple{We write~$p \mapsto \overline{p}$ for the involution on~$\Z[t^{\pm 1}]$ induced by~$t \mapsto t^{-1}$.
Given a~$\Z[t^{\pm 1}]$-module~$H$, we write~$\overline{H}$ for the~$\Z[t^{\pm 1}]$-module whose underlying abelian group is~$H$ but with module structure given by~$p \cdot h=\overline{p}h$ for~$h \in H$ and~$p \in \Z[t^{\pm 1}]$.  We write $H^*:=\overline{\Hom_{\Z[t^{\pm 1}]}(H,\Z[t^{\pm 1}])}$.}


\subsection*{Acknowledgements}

We thank the referee for helpful comments which helped improve the paper.
MP was partially supported by the EPSRC New Investigator grant EP/T028335/2 and EPSRC New Horizons grant EP/V04821X/2.

\section{Linking forms and unions}
\label{sec:LinkingForms}

Since a large part of this paper is concerned with the Blanchfield form and isometries thereof, we start by recalling terminology related to the underlying algebra.
In Section~\ref{sub:Algebra} we recall symmetric and quadratic linking forms. In Section~\ref{sub:Boundary} we recall how a Hermitian form has a boundary which is a symmetric linking form, and the boundary of an even form has the additional structure of a quadratic refinement.   In Section~\ref{sub:Unions} we recall how isometries of these linking forms can be used to glue two linking forms together, and we show that the union of two even forms along an isometry of their boundary quadratic linking forms is again an even form.

\subsection{Symmetric and quadratic linking forms}
\label{sub:Algebra}

Everything in this subsection is the special case for $\Z[t^{\pm 1}]$ of a general theory for arbitrary rings with involution  developed by Ranicki~\cite[\S3.4]{RanickiExact}.


\begin{definition}
\label{def:SymmetricLinkinForm}
  A \emph{symmetric linking form} over $\Z[t^{\pm 1}]$ is a pair $(T,\ell)$, where $T$ is a torsion $\Z[t^{\pm 1}]$-module, and $\ell \colon T \times T \to \Q(t)/\Z[t^{\pm 1}]$ is a Hermitian, sesquilinear, nonsingular pairing.
\end{definition}

We write $S:= \Z[t^{\pm 1}] \sm \{0\}$, and set
\begin{align*}
  Q^1(\Q(t)/\Z[t^{\pm 1}]) &:= \frac{\{b \in \Q(t) \mid b-\ol{b} \in \Z[t^{\pm 1}]\}}{\Z[t^{\pm 1}]}, \\
  Q_1(\Z[t^{\pm 1}],S) &:= \frac{\{b \in \Q(t) \mid b= \ol{b}\}}{\{a + \ol{a} \mid a \in \Z[t^{\pm 1}]\}},\\
  Q^1(\Z[t^{\pm 1}],S) &:= \frac{\{b \in \Q(t) \mid b-\ol{b} = a - \ol{a} \text{ for some } a \in \Z[t^{\pm 1}]\}}{\Z[t^{\pm 1}]} \subseteq Q^1(\Q(t)/\Z[t^{\pm 1}]).
\end{align*}

For a symmetric linking form $(T,\ell)$, we have that $\ell(x,x) \in Q^1(\Q(t)/\Z[t^{\pm 1}])$ for all $x \in T$. The symmetric linking form is called \emph{even} if $\ell(x,x) \in  Q^1(\Z[t^{\pm 1}],S)$ for all $x \in T$.
We define a map
\begin{align*}
  q \colon  Q_1(\Z[t^{\pm 1}],S)  &\to  Q^1(\Z[t^{\pm 1}],S)\\
  [b] &\mapsto [b].
\end{align*}

\begin{definition}\label{defn:quadratic-refinement}
  A \emph{quadratic refinement} of an even symmetric linking form $(T,\ell)$ is a function $\mu \colon T \to Q_1(\Z[t^{\pm 1}],S)$ satisfying
  \begin{enumerate}[(i)]
    \item\label{item:quad-refinement-i} $\mu(rx) = r\mu (x) \ol{r} \in  Q_1(\Z[t^{\pm 1}],S)$ for all $x \in T$ and for all $r \in \Z[t^{\pm 1}]$;
    \item\label{item:quad-refinement-ii} $\mu(x+y) = \mu(x) + \mu(y) + \ell(x,y) + \ol{\ell(x,y)} \in  Q_1(\Z[t^{\pm 1}],S)$ for all $x,y \in T$;
    \item\label{item:quad-refinement-iii} $q(\mu(x)) = \ell(x,x) \in  Q^1(\Z[t^{\pm 1}],S)$ for all $x \in T$.
  \end{enumerate}
  A triple $(T,\ell,\mu)$ consisting of a symmetric linking form together with a quadratic refinement is called a \emph{quadratic linking form} over $\Z[t^{\pm 1}]$.
\end{definition}

\purple{For aficionados of~\cite{RanickiExact},  we emphasise that we are using the non-split version of quadratic linking forms.}

We will also need to consider isometries and automorphisms of symmetric and quadratic linking forms.

\begin{definition}
  Let $(T,\ell)$ and $(T',\ell')$ be symmetric linking forms over $\Z[t^{\pm 1}]$ and let $\mu \colon T \to Q_1(\Z[t^{\pm 1}],S)$ and $\mu' \colon T' \to Q_1(\Z[t^{\pm 1}],S)$ be respective quadratic refinements.
  \begin{enumerate}
 \item\label{item:1-defn-auts} An isomorphism
$f \colon T \to T'$ is
 an \emph{isometry of symmetric linking forms}
 if $$\ell'(f(x),f(y)) = \ell(x,y)$$ for every $x,y \in T$.
    \item\label{item:2-defn-auts} The isometry of symmetric linking forms~$f$ is moreover an \emph{isometry of quadratic linking forms}, $f \colon (T,\ell,\mu) \cong (T',\ell',\mu')$ if $\mu'(f(x)) = \mu(x)$ for every $x \in T$.
    \item\label{item:3-defn-auts} If $(T,\ell) = (T',\ell')$, then $f$ as in \eqref{item:1-defn-auts} is an \emph{automorphism of symmetric linking forms}. We write $\Aut(T,\ell)$ for the group of automorphisms.
    \item\label{item:4-defn-auts} If $(T,\ell,\mu) = (T',\ell',\mu')$, then $f$  as in \eqref{item:2-defn-auts} is an \emph{automorphism of quadratic linking forms}. We write $\Aut(T,\ell,\mu)$ for the group of automorphisms.
  \end{enumerate}
\end{definition}

\begin{remark}\label{remark:inclusion-of-automorphisms-quad-symm}
  Given a quadratic linking form $(T,\ell,\mu)$ over $\Z[t^{\pm 1}]$ with underlying symmetric linking form $(T,\ell)$,  we note that~$\Aut(T,\ell,\mu) \subseteq \Aut(T,\ell)$.
  We give an example showing that this can be a proper inclusion: multiplication by $3$ induces an isomorphism $\Z[t^{\pm 1}]/8 \to \Z[t^{\pm 1}]/8$ that preserves the linking form $\ell(x,y)=\frac{1}{8}x\overline{y}$ but does not preserve the quadratic refinement $\mu(x)=\frac{1}{8}x$.
Indeed $\mu(3)=\frac{9}{8} \neq \frac{1}{8}=\mu(1) \in Q_1(\Z[t^{\pm 1}],S)$ because~$1$ cannot be written as $a+\overline{a}$ with $a \in \Z[t^{\pm 1}]$.
\end{remark}


\subsection{Boundaries of quadratic forms}
\label{sub:Boundary}

We recall some terminology about Hermitian forms.
\purple{A \emph{Hermitian form} refers to a pair $(H,\lambda)$ where $H$ is a free $\Z[t^{\pm 1}]$-module and $\lambda \colon H \times H \to \Z[t^{\pm 1}]$ is a sesquilinear Hermitian pairing.}
Given a Hermitian form~$(H,\lambda)$ over $\Z[t^{\pm 1}]$, we use $\widehat{\lambda} \colon H \to H^*=:\overline{\Hom_{\Z[t^{\pm 1}]}(H,\Z[t^{\pm 1}])}$ to denote the linear map defined by~$\widehat{\lambda}(y)(x)=\lambda(x,y)$.
We often refer to $\widehat{\lambda}$ as the \emph{adjoint} of $\lambda$.
We say that $\lambda$ is \emph{nondegenerate} if $\widehat{\lambda}$ is injective and \emph{nonsingular} if $\widehat{\lambda}$ is an isomorphism.
We also recall that a Hermitian form $(H,\lambda)$ is called \emph{even} if for all $x \in H$, there exists $a \in  \Z[t^{\pm 1}]$ such that~$\lambda(x,x) = a + \ol{a}$.

We describe how a nondegenerate even Hermitian form over~$\Z[t^{\pm 1}]$ determines a quadratic linking form,  following \cite[p.~243]{RanickiExact}.

\begin{definition}
\label{def:BoundaryLinkingForm}
The \emph{boundary symmetric linking form} of a nondegenerate Hermitian form~$(H,\lambda)$ over~$\Z[t^{\pm 1}]$ is the symmetric linking form~$(\coker(\widehat{\lambda}),\partial \lambda)$, where~$\partial \lambda$ is defined as
\begin{align*}
 \partial \lambda \colon \coker(\widehat{\lambda}) \times \coker(\widehat{\lambda}) &\to \Q(t)/\Z[t^{\pm 1}]  \\
 ([x],[y]) &\mapsto  \smfrac{1}{s}(y(z)),
\end{align*}
where, as~$\coker(\widehat{\lambda})$ is~$\Z[t^{\pm 1}]$-torsion, there exists~$s \in \Z[t^{\pm 1}]$ and~$z \in H$ such that~$sx=\widehat{\lambda}(z)$.

If $(H,\lambda)$ is additionally assumed to be even, then its \emph{boundary quadratic linking form} is the quadratic linking form~$(\coker(\widehat{\lambda}),\partial \lambda, \mu_{\partial})$, where the quadratic refinement of $\partial \lambda$ is
\begin{align*}
\mu_{\partial} \colon \coker(\widehat{\lambda}) &\to Q_1(\Z[t^{\pm 1}],S)  \\
 [y] &\mapsto  \smfrac{1}{s}(y(z)),
\end{align*}
with $s \in \Z[t^{\pm 1}]$ and~$z \in H$ such that~$sy=\widehat{\lambda}(z)$.
\end{definition}

We assert that~$\partial \lambda$ is independent of the choices involved, and is nonsingular, sesquilinear,  and Hermitian. These can all be verified directly.
To enable us to give a reference to existing literature, note that the boundary symmetric linking form is the linking form on~$H^1(C^*)$ associated with the 1-dimensional $\Q(t)$-acyclic symmetric Poincar\'{e} complex~$(C_*,\varphi)$ over~$\Z[t^{\pm 1}]$ given by
\[\xymatrix{
  C^0 = H \ar[r]^-{\lambda} \ar[d]^{\varphi_0=\id} & C^1 = H^* \ar[d]^{\varphi_0=\id} \\
    C_1 = H \ar[r]^-{\lambda^* = \lambda}  & C_0 = H^*.}\]
In \cite[Propositions~3.3,~3.4,~and~3.8]{PowellBlanchfield} it was shown that such a linking form is well-defined, nonsingular, sesquilinear,  and Hermitian.

The next proposition is implicit in \cite[p.~243]{RanickiExact}. As far as we know such a proof has no appeared in the literature, so for the convenience of the reader we provide the details of the proof.

\begin{proposition}
The function~$\mu_{\partial}$ is a well-defined function $\coker(\widehat{\lambda}) \to Q_1(\Z[t^{\pm 1}],S)$, and is a quadratic refinement of the boundary symmetric linking form $\partial \lambda$ of $(H,\lambda)$, i.e.\ $\mu_{\partial}$ satisfies the requirements of Definition~\ref{defn:quadratic-refinement}.
  \end{proposition}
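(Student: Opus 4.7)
The proof will verify, in order: (a) that $\mu_{\partial}([y]) = y(z)/s$ is a well-defined element of $Q_1(\Z[t^{\pm 1}],S)$, and then (b)--(d) the three quadratic refinement axioms (i)--(iii) of Definition~\ref{defn:quadratic-refinement}. Throughout, the key algebraic lemma is that the defining equation $sy = \widehat{\lambda}(z) \in H^* = \ol{\Hom(H, \Z[t^{\pm 1}])}$ is equivalent to $\ol{s}\,y(w) = \lambda(w, z)$ for all $w \in H$, once the conjugate-module structure on $H^*$ is accounted for. Evenness of $(H, \lambda)$ enters via the fact that $\lambda(z,z)$ and $\lambda(h,h)$ are of the form $a + \ol{a}$.

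For well-definedness, three things need checking. First, $y(z)/s$ lies in $\{b \in \Q(t) : b = \ol{b}\}$: applying the lemma with $w = z$ gives $\ol{s}\, y(z) = \lambda(z,z)$, which is Hermitian, so $y(z)/s = \lambda(z,z)/(s\ol{s})$ is bar-fixed. Second, the value is independent of the choice of $(s, z)$: for any other pair $(s', z')$, nondegeneracy and the identity $\widehat{\lambda}(s'z) = \widehat{\lambda}(sz')$ force $s'z = sz'$, so $y(z)/s = y(z')/s'$ in $\Q(t)$ by $\Z[t^{\pm 1}]$-linearity of $y$. Third, replacing $y$ by $y + \widehat{\lambda}(h)$ should not change the class in $Q_1$: taking $(s, z + sh)$ as preimage data for the new $y$, direct expansion gives
\[\mu_{\partial}([y + \widehat{\lambda}(h)]) - \mu_{\partial}([y]) = y(h) + \lambda(z,h)/s + \lambda(h,h).\]
Here evenness yields $\lambda(h,h) = a_0 + \ol{a_0}$, and applying the lemma with $w = h$ (then conjugating) shows $\lambda(z,h)/s = \ol{y(h)}$. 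The difference therefore rewrites as $y(h) + \ol{y(h)} + a_0 + \ol{a_0}$, which is of the form $a + \ol{a}$ with $a = y(h) + a_0 \in \Z[t^{\pm 1}]$, hence vanishes in $Q_1$.

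The three refinement axioms then follow by short direct computations. Axiom (i) uses the preimage data $(s, rz)$ for $ry$ together with $(ry)(rz) = r\ol{r}\,y(z)$ (again from the conjugate-module structure) to give $\mu_{\partial}([ry]) = r\ol{r}\,\mu_{\partial}([y]) = r\mu_{\partial}([y])\ol{r}$. Axiom (ii) uses the common preimage datum $s_1 s_2 (y_1 + y_2) = \widehat{\lambda}(s_2 z_1 + s_1 z_2)$ and expansion:
\[\mu_{\partial}([y_1 + y_2]) = \mu_{\partial}([y_1]) + \mu_{\partial}([y_2]) + \frac{y_2(z_1)}{s_1} + \frac{y_1(z_2)}{s_2},\]
in which the cross-terms are $\partial\lambda([y_1], [y_2])$ and $\ol{\partial\lambda([y_1], [y_2])}$ respectively, by Hermitianness of $\partial\lambda$. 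Axiom (iii) is tautological: both $q(\mu_{\partial}([y]))$ and $\partial\lambda([y], [y])$ are represented by the same element $y(z)/s$, and $q$ merely relaxes the equivalence from $\{a+\ol{a}\}$-cosets to $\Z[t^{\pm 1}]$-cosets. I expect the main obstacle to be the representative-independence step: the identity $\lambda(z,h)/s = \ol{y(h)}$ is the single place where evenness does not suffice on its own, and both the conjugate-module structure on $H^*$ and Hermitianness of $\lambda$ must be invoked, so the bookkeeping of conventions must be carried out carefully.
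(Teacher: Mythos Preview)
Your proof is correct and follows essentially the same route as the paper's. The one noteworthy difference is in the independence-of-$(s,z)$ step: you argue that $\widehat{\lambda}(s'z)=ss'y=\widehat{\lambda}(sz')$ and invoke injectivity of $\widehat{\lambda}$ to conclude $s'z=sz'$, giving $y(z)/s=y(z')/s'$ already in $\Q(t)$; the paper instead carries out a longer explicit manipulation of $\tfrac{1}{s}y(z)-\tfrac{1}{s'}y(z')$. Your argument is cleaner. Elsewhere the computations match almost line for line, including the representative-independence step (the paper writes $u$ for your $h$ and obtains the same identity $\lambda(z,h)/s=\ol{y(h)}$) and the verification of axioms (i)--(iii). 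One small remark: the paper phrases the vanishing of $\lambda(h,h)$ in $Q_1$ by saying it is ``symmetric'', but as you correctly note it is the evenness hypothesis on $\lambda$ that is actually being used there.
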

\begin{proof}
  First we show it is well-defined.  Let $y$, $z$, and $s$ be as in Definition~\ref{def:BoundaryLinkingForm}. Express $\widehat{\lambda}$ as a Hermitian matrix $A = \ol{A}^T$ over $\Z[t^{\pm 1}]$ with respect to some basis of $H$. Then $A$ is invertible over $\Q(t)$ because $\lambda$ is nondegenerate, so $\det(A)\neq 0$.  We have $\mu_{\partial}(y) = y^TA^{-1}\ol{y}$.
  Therefore
  \[\ol{\mu_{\partial}(y)} = \ol{\mu_{\partial}(y)}^T  = (\ol{y^TA^{-1}\ol{y}})^T = y^T\ol{A^{-1}}^T\ol{y} = y^T(\ol{A}^T)^{-1}\ol{y} = y^TA^{-1}\ol{y} = \mu_{\partial}(y).\]
  Hence $\mu_{\partial}(y) \in Q_1(\Z[t^{\pm 1}],S)$.

  Next we show that the choices of $z$ and $s$ do not change $\mu_\partial(y)$. Let $y \in H$ and let $y \in  \coker(\widehat{\lambda})$, $s\in \Z[t^{\pm 1}]$, and $z \in H$ be as in Definition~\ref{def:BoundaryLinkingForm}, with $\widehat{\lambda}(z) =sy$. Let  $s' \in \Z[t^{\pm 1}]$ and $z' \in H$ be another pair of choices, such that $\widehat{\lambda}(z') = s'y$.  Since  $\mu_{\partial}(y) \in Q_1(\Z[t^{\pm 1}],S)$, we have $\smfrac{1}{s'} y(z') = \ol{\smfrac{1}{s'} y(z')}$. The difference between the two  computations of  $\mu_{\partial}(y)$ yields
  \begin{align*}
     \smfrac{1}{s}y(z) - \smfrac{1}{s'}y(z') =& \smfrac{1}{s}y(z) - \ol{\smfrac{1}{s'}y(z')}
     = \smfrac{1}{s}y(z)\smfrac{\ol{s'}}{\ol{s'}} - \ol{\smfrac{1}{s'}y(z')\smfrac{\ol{s}}{\ol{s}}}
     = \smfrac{1}{s}(s'y)(z)\smfrac{1}{\ol{s'}} - \ol{\smfrac{1}{s'}(sy)(z')\smfrac{1}{\ol{s}}}\\
     =& \smfrac{1}{s}\widehat{\lambda}(z')(z)\smfrac{1}{\ol{s'}} - \ol{\smfrac{1}{s'}\widehat{\lambda}(z)(z')\smfrac{1}{\ol{s}}}
     = \smfrac{1}{s}\lambda(z,z')\smfrac{1}{\ol{s'}} - \ol{\smfrac{1}{s'}\ol{\lambda(z,z')}\smfrac{1}{\ol{s}}} = 0 \in Q_1(\Z[t^{\pm 1}],S).
  \end{align*}

Next we show that $\mu_{\partial}$ does not depend on the representative $y$ for the class in $\coker(\widehat{\lambda})$. Replace $y \in \coker(\widehat{\lambda})$ by another representative $y + \widehat{\lambda}(u)$, for some $u \in H$. Then $\widehat{\lambda}(z+su) = s(y+ \widehat{\lambda}(u))$.
Therefore
\[\mu_{\partial}(y + \widehat{\lambda}(u)) = \smfrac{1}{s}(y+ \widehat{\lambda}(u))(z+su) = \smfrac{1}{s} y(z) + \smfrac{1}{s} \widehat{\lambda}(u)(z) + y(u) +\widehat{\lambda}(u)(u).\]
We have \[\widehat{\lambda}(u)(z) = \lambda(z,u) = \ol{\lambda(u,z)} = \ol{\widehat{\lambda}(z)(u)} = \ol{(sy)(u)} = \ol{y(u)\ol{s}} = s \ol{y(u)}.\]
Substituting, we obtain that \[\mu_{\partial}(y + \widehat{\lambda}(u)) = \smfrac{1}{s} y(z) + \ol{y(u)} + y(u) +\lambda(u,u).\]
The last term is symmetric over $\Z[t^{\pm 1}]$, which implies it is of the form $a + \ol{a}$. Hence up to terms of the form $a + \ol{a}$, we have $\mu_{\partial}(y + \widehat{\lambda}(u)) = \smfrac{1}{s} y(z) = \mu_{\partial}(y)$, as desired.

Now we know that $\mu_{\partial}$ is well-defined, we prove that it satisfies the conditions in Definition~\ref{defn:quadratic-refinement} for it to be a quadratic refinement of the boundary symmetric linking form $\partial \lambda$.
For \eqref{item:quad-refinement-i}, let $y \in \coker(\widehat{\lambda})$, let $r \in \Z[t^{\pm 1}]$, and let $z \in H$ be such that $\widehat{\lambda}(z)= sy$. Then $\widehat{\lambda}(rz) = rsy= sry$. Thus \[\mu_{\partial}(ry) =  \smfrac{1}{s}((ry)(rz)) = \smfrac{r}{s}(y(z))\ol{r} = r\mu_{\partial}(y)\ol{r},\] as desired.
Next, aiming for~\eqref{item:quad-refinement-ii}, we compute $\mu_{\partial}(x+y)$, for $x,y \in \coker(\widehat{\lambda})$. Let $r,s \in \Z[t^{\pm 1}]$ and $w,z \in H$ be such that $\widehat{\lambda}(w)= rx$ and $\widehat{\lambda}(z)= sy$. Then $\widehat{\lambda}(sw+ry)= srx + rsy = rs(x+y)$. Hence \begin{align*}
\mu_\partial(x+y) &= \smfrac{1}{rs}(x+y)(sw+rz) = \smfrac{1}{r}x(w) + \smfrac{1}{s}(y)(z) + \smfrac{1}{r}y(w) + \smfrac{1}{s}x(z)  \\ &= \mu_{\partial}(x) + \mu_{\partial}(y)  + \partial \lambda(x,y) +  \partial \lambda(y,x).
\end{align*}
Since $\partial \lambda$ is Hermitian, this proves \eqref{item:quad-refinement-ii}.
Condition \eqref{item:quad-refinement-iii} is immediate from the formulae.
\end{proof}

%
%
%
%

\begin{remark}
\label{rem:ActionIsometry}
We note for later use that an isomorphism~$F \colon H_0 \to H_1$ induces an isomorphism~$F^{-*}:= (F^*)^{-1} \colon H_0^* \to H_1^*$ and that if additionally, the isomorphism~$F$ is an isometry, then~$F^{-*}$ descends to an isomorphism
\[\partial F:=F^{-*} \colon \coker(\widehat{\lambda}_0)\to \coker(\widehat{\lambda}_1)\]
which determines an isometry of quadratic linking forms.
It follows that $\Aut(\lambda)$ acts both on $\Aut(\coker(\widehat{\lambda}),\partial \lambda)$ and on the subset~$\Aut(\coker(\widehat{\lambda}),\partial \lambda,\partial \mu_{\partial})$ by $F \cdot h=h \circ \partial F^{-1}$.
\end{remark}

\subsection{Algebraic unions}
\label{sub:Unions}

We recall the definition of the union of two Hermitian forms along an isometry of their boundary linking forms.
\purple{The definition appears for the ring $\Z$ in~\cite[Lemma 3.6]{CrowleyThesis} and was generalised to the
ring $\Z[t^{\pm 1}]$ in~\cite[Construction 2.7]{ConwayPowell}.}
The goal of this section is to prove that if the isometry preserves the quadratic refinements,
then the union is an even form.

\begin{construction}
\label{construction:Gluing}
Let~$(H_0,\lambda_0)$ and~$(H_1,\lambda_1)$ be nondegenerate Hermitian forms over~$\Z[t^{\pm 1}]$, and
let~$h \colon (\coker(\widehat{\lambda}_0),\partial \lambda_0) \to (\coker(\widehat{\lambda}_1),\partial \lambda_1)$ be an isometry of their boundary symmetric linking forms.
Consider the pair~$(H_0 \cup_h H_1,\lambda_0 \cup_h -\lambda_1)$ with
\begin{align*}
&H_0 \cup_h H_1:=\ker \big(h \pi_0-\pi_1 \colon H_0^* \oplus H_1^* \to \coker(\widehat{\lambda}_1) \big)  \\
 &\lambda_0 \cup_h -\lambda_1 \left( \bsm x_0 \\ x_1 \esm, \bsm y_0 \\ y_1  \esm \right) =\smfrac{1}{s_0}y_0(z_0)-\smfrac{1}{s_1}y_1(z_1) \in \Q(t),
 \end{align*}
where, since~$\coker(\widehat{\lambda}_i)$ is torsion, there exists~$s_i \in \Z[t^{\pm 1}]$ and~$z_i \in H_i$ such that~$s_ix_i=\widehat{\lambda}_i(z_i)$.
Since the Hermitian forms~$\lambda_0$ and~$\lambda_1$ are nondegenerate, it is not difficult to prove that the pairing~$\lambda_0 \cup_h -\lambda_1$ does not depend on the choice of~$s_0,s_1,z_0,z_1$.
One verifies that $\lambda_0 \cup_h -\lambda_1$ is a sesquilinear, Hermitian form and takes values in $\Z[t^{\pm 1}]$; see~\cite[Proposition 2.8]{ConwayPowell}.
This pairing will be referred to as the \emph{algebraic union} of $\lambda_0$ and $\lambda_1$.
\end{construction}

\begin{lemma}
\label{lem:PropertiesUnion}
Let~$(H_0,\lambda_0),(H_1,\lambda_1)$ and $(H,\lambda)$ be nondegenerate Hermitian forms over~$\Z[t^{\pm 1}]$, and
let~$h \colon (\coker(\widehat{\lambda}_0),\partial \lambda_0) \to (\coker(\widehat{\lambda}_1),\partial \lambda_1)$ be an isometry of the boundary linking forms.
If $F \colon \lambda_0 \cong \lambda_1$ is an isometry,  then there is an isometry
$$\lambda_0 \cup_{h} -\lambda_1 \cong \lambda_1 \cup_{h \circ \partial F^{-1}} -\lambda_1.$$
\end{lemma}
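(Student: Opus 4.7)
The plan is to produce the isometry explicitly as $\Phi := F^{-*} \oplus \id_{H_1^*}$ acting on $H_0^* \oplus H_1^*$, and to check that it restricts to an isometry between the two algebraic unions. Since Remark~\ref{rem:ActionIsometry} identifies $\partial F$ with $F^{-*}$ on the cokernel, this choice is the natural one: it is the unique map on the first factor that carries the gluing condition $h[\alpha_0]=[\alpha_1]$ for the first union to the condition $(h \circ \partial F^{-1})[F^{-*}(\alpha_0)] = [\alpha_1]$ for the second.

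First I would verify that $\Phi$ really maps $H_0 \cup_h H_1$ into $H_1 \cup_{h \circ \partial F^{-1}} H_1$. If $(\alpha_0,\alpha_1)$ lies in the first kernel, then
$$(h \circ \partial F^{-1})[F^{-*}(\alpha_0)] = h[\alpha_0] = [\alpha_1],$$
so $\Phi(\alpha_0,\alpha_1)$ lies in the second kernel. Bijectivity is immediate because both coordinates of $\Phi$ are isomorphisms.

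The core step will be checking that $\Phi$ preserves the sesquilinear pairings, and the key input there is the reformulation of ``$F$ is an isometry'' as the adjoint-level identity $\widehat{\lambda}_1 \circ F = F^{-*} \circ \widehat{\lambda}_0 \colon H_0 \to H_1^*$. Given elements $(\alpha_0,\alpha_1), (\beta_0,\beta_1) \in H_0 \cup_h H_1$ and data $s_i \in \Z[t^{\pm 1}]$, $z_i \in H_i$ with $s_i \alpha_i = \widehat{\lambda}_i(z_i)$, this identity gives
$$\widehat{\lambda}_1(F(z_0)) = F^{-*}(\widehat{\lambda}_0(z_0)) = s_0\, F^{-*}(\alpha_0),$$
so $z_0' := F(z_0)$ and $s_0' := s_0$ are valid choices of the auxiliary data associated with the transported element $F^{-*}(\alpha_0)$ in the second union. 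Substituting these into the defining formula of Construction~\ref{construction:Gluing} yields
$$(\lambda_1 \cup_{h \circ \partial F^{-1}} -\lambda_1)\bigl(\Phi(\alpha_0,\alpha_1),\Phi(\beta_0,\beta_1)\bigr) = \smfrac{1}{s_0} F^{-*}(\beta_0)(F(z_0)) - \smfrac{1}{s_1}\beta_1(z_1),$$
and since $F^{-*}(\beta_0)(F(z_0)) = \beta_0(z_0)$ directly from $F^{-*} = (F^*)^{-1}$, this agrees with the value of $\lambda_0 \cup_h -\lambda_1$ on the original pair. That completes the verification.

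There is no real obstacle here: the whole proof is a formal unwinding of Construction~\ref{construction:Gluing}, and the single nontrivial ingredient is the adjoint identity $\widehat{\lambda}_1 F = F^{-*} \widehat{\lambda}_0$ expressing that $F$ is an isometry.
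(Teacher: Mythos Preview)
Your argument is correct: the map $\Phi = F^{-*}\oplus \id_{H_1^*}$ is precisely the right candidate, and your verification that it carries the first kernel to the second and intertwines the pairings is clean and complete. The paper itself does not give a proof here---it simply cites \cite[Proposition~2.8]{ConwayPowell}---so you have in fact supplied the explicit argument that the paper defers to the literature; the approach you take is the natural one and is what one finds upon unwinding that reference.
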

\begin{proof}
 See \cite[Proposition 2.8]{ConwayPowell}.   
\end{proof}

\begin{lemma}
\label{lem:UnionEven}
Let $(H_0,\lambda_0)$ and~$(H_1,\lambda_1)$ be two nondegenerate even Hermitian forms over~$\Z[t^{\pm 1}]$.
  Suppose that we have an isometry $F \colon (H_0,\lambda_0) \cong (H_1,\lambda_1)$ and that
  \[h \colon
  (\coker(\widehat{\lambda}_0),\partial \lambda_0,(\mu_{\partial})_0) \to (\coker(\widehat{\lambda}_1),\partial \lambda_1,(\mu_{\partial})_1)\]
   is an isometry of quadratic linking forms. Then the algebraic union $\lambda_0 \cup_h -\lambda_1$ is even.
\end{lemma}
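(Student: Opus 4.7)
The plan is to prove evenness directly from the definitions, by recognising that the self-pairing of the union is exactly the difference of the two quadratic refinements in $\Q(t)$, which must lie in $\{a + \overline{a}\}$ because $h$ preserves those refinements. The hypothesis that $F$ is an isometry will not be needed for the proof as I envisage it; it appears to be included because this lemma is invoked in a context where $F$ is also available (e.g.\ to apply Lemma~\ref{lem:PropertiesUnion}).

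In more detail, let $x = (x_0, x_1) \in H_0 \cup_h H_1 \subseteq H_0^* \oplus H_1^*$; by definition of the underlying module, this means $h([x_0]) = [x_1] \in \coker(\widehat{\lambda}_1)$. Choose $s_i \in \Z[t^{\pm 1}]$ and $z_i \in H_i$ (for $i=0,1$) with $s_i x_i = \widehat{\lambda}_i(z_i)$; these exist because the cokernels are torsion. By Construction~\ref{construction:Gluing},
\[
(\lambda_0 \cup_h -\lambda_1)(x,x) \;=\; \tfrac{1}{s_0} x_0(z_0) \;-\; \tfrac{1}{s_1} x_1(z_1) \;\in\; \Z[t^{\pm 1}].
\]

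The key observation is that the two terms on the right are, by Definition~\ref{def:BoundaryLinkingForm}, representatives in $\{b \in \Q(t) \mid b = \overline{b}\}$ of $(\mu_\partial)_0([x_0])$ and $(\mu_\partial)_1([x_1])$ respectively, viewed as elements of $Q_1(\Z[t^{\pm 1}], S)$. Since $h$ is an isometry of \emph{quadratic} linking forms and $h([x_0]) = [x_1]$, we have
\[
(\mu_\partial)_0([x_0]) \;=\; (\mu_\partial)_1(h([x_0])) \;=\; (\mu_\partial)_1([x_1]) \;\in\; Q_1(\Z[t^{\pm 1}], S).
\]
Lifting this equality back to $\Q(t)$, the difference of the two chosen representatives lies in $\{a + \overline{a} \mid a \in \Z[t^{\pm 1}]\}$. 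That is, there exists $a \in \Z[t^{\pm 1}]$ with $(\lambda_0 \cup_h -\lambda_1)(x,x) = a + \overline{a}$, which is exactly the evenness condition at $x$. As $x$ was arbitrary, $\lambda_0 \cup_h -\lambda_1$ is even.

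The argument is essentially a computation and there is no serious obstacle; the only subtlety worth watching is the bookkeeping between elements of $\Q(t)$ and their classes in $Q_1(\Z[t^{\pm 1}], S)$, together with the fact that the self-pairing of the union is \emph{a priori} known to land in $\Z[t^{\pm 1}]$ so that the class equality automatically lifts to an equality of representatives up to something of the form $a + \overline{a}$.
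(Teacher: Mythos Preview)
Your proof is correct and follows essentially the same route as the paper's: recognise the self-pairing as a difference of representatives of the two quadratic refinements, use that $h$ preserves those refinements to get vanishing in $Q_1(\Z[t^{\pm 1}],S)$, and read off evenness from the shape of the indeterminacy. The only difference is that the paper first invokes Lemma~\ref{lem:PropertiesUnion} and the isometry $F$ to reduce to the case $(H_0,\lambda_0)=(H_1,\lambda_1)$ before running the computation, whereas you work directly with the two forms; your observation that $F$ is not actually needed for the argument is correct.
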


\begin{proof}
Using Lemma~\ref{lem:PropertiesUnion} we can assume without loss of generality that $H_0=H_1$ and $\lambda_0=\lambda_1$
and $(\mu_{\partial})_0 = (\mu_{\partial})_1$.
Write them both as $(H,\lambda,\mu_{\partial})$.
This means composing $h$ with $\partial F$, but since $\partial F$ is an isometry of quadratic linking forms too, this does not affect the argument.  We abuse notation and without loss of generality use $h$ to denote the new isometry of boundary quadratic linking forms.

  It therefore suffices to check that for every $x_0,x_1 \in H^*$ such that $h \circ \pi_0(x_0) = \pi_1(x_1)$, the self-intersection  $\lambda \cup_h -\lambda \big((x_0,x_1),(x_0,x_1) \big)$ is of the form $a+\overline{a}$ for some $a \in \Z[t^{\pm 1}]$.

Pick $z_0,z_1 \in H$ and $s_0,s_1 \in \Z[t^{\pm 1}]$ such that $s_0x_0=\widehat{\lambda}(z_0)$ and $s_1x_1=\widehat{\lambda}(z_1)$.
This implies both that $\mu_\partial(\pi_i(x_i))=\frac{1}{s_i}(x_i(z_i)) \in Q_1(\Z[t^{\pm 1}],S)$ and that
$$\lambda \cup_h -\lambda \big((x_0,x_1),(x_0,x_1) \big)=\smfrac{1}{s_0}x_0(z_0) - \smfrac{1}{s_1}x_1(z_1) \in \Z[t^{\pm 1}].$$
Passing to $Q_1(\Z[t^{\pm 1}],S)$, by the definition of $\mu_\partial$ this equals \[\mu_{\partial}(\pi_0(x_0)) - \mu_{\partial}(\pi_1(x_1)) =
\mu_{\partial}(\pi_0(x_0)) - \mu_{\partial}(h \circ \pi_0(x_0)) = 0.\]
The first equality used  $h \circ \pi_0(x_0) = \pi_1(x_1)$. The second equality used that $h$ is an isometry of quadratic linking forms.

Now we just have to note that the indeterminacy in $Q_1(\Z[t^{\pm 1}],S)$ consists entirely of even elements $\{a + \ol{a} \mid a \in \Z[t^{\pm 1}]\}$, and therefore $\lambda \cup_h -\lambda \big((x_0,x_1),(x_0,x_1) \big) \subseteq \{a + \ol{a} \mid a \in \Z[t^{\pm 1}]\}$.  It follows that $\lambda \cup_h -\lambda$ is even, as desired.
\end{proof}

\section{The Blanchfield form}
\label{sec:Blanchfield}


In Section~\ref{sub:bl-defn} we review the definition of the Blanchfield form and how it is related to the equivariant intersection form of a $4$-manifold with fundamental group~$\Z$.
Then in Section~\ref{sub:HomotopyEquivalencesIsometry} we review isometries of Blanchfield forms. In Section~\ref{sub:unions-of-spin} we prove promised the spin gluing result, which demonstrates how isometries of the Blanchfield form together with a quadratic refinement can be used to ensure that the union of two spin $4$-manifolds is again spin.

\subsection{The Blanchfield form}\label{sub:bl-defn}
We recall the definition of the Blanchfield form $\Bl_Y$ of a closed $3$-manifold $Y$ \purple{equipped with an epimorphism $\varphi \colon \pi_1(Y) \twoheadrightarrow \Z$} and how, if $M$ is a $4$-manifold with ribbon boundary,  then $\Bl_{\partial M}$ is related to the equivariant intersection form~$\lambda_M$ of $M$.

\begin{construction}
Let $Y$ be a closed $3$-manifold and let $\varphi \colon \pi_1(Y) \twoheadrightarrow \Z$ be an epimorphism.
Assume that the Alexander module $H_1(Y;\Z[t^{\pm 1}])$ is torsion so that, in particular, the Bockstein homomorphism $\operatorname{BS} \colon H^1(Y;\Q(t)/\Z[t^{\pm 1}]) \to H^2(Y;\Z[t^{\pm 1}])$ is an isomorphism.
Consider the composition of Poincar\'e duality, the inverse Bockstein homomorphism and the evaluation homomorphism:
\begin{align*}
\Phi \colon H_1(Y;\Z[t^{\pm 1}]) &\xrightarrow{\operatorname{PD, \cong}} H^2(Y;\Z[t^{\pm 1}]) \xrightarrow{\operatorname{BS}^{-1},\cong} H^1(Y;\Q(t)/\Z[t^{\pm 1}])  \\
&\xrightarrow{\operatorname{ev}} \overline{\Hom(H_1(Y;\Z[t^{\pm 1}]),\Q(t)/\Z[t^{\pm 1}])}.
\end{align*}
Using the universal coefficient spectral sequence,  one can check that the evaluation map is an isomorphism, and thus so is $\Phi$.
Thus the pairing $(x,y) \mapsto \Phi(y)(x)$ is nonsingular. It is straightforward to see that this pairing is sesquilinear.  It is also Hermitian; see e.g.~\cite{PowellBlanchfield}.
\end{construction}

\begin{definition}
\label{def:Blanchfield}
Let $Y$ be a closed $3$-manifold and let $\varphi \colon \pi_1(Y) \twoheadrightarrow \Z$ be an epimorphism such that the Alexander module $H_1(Y;\Z[t^{\pm 1}])$ is torsion.
The \emph{Blanchfield form}
$$\Bl_Y \colon  H_1(Y;\Z[t^{\pm 1}]) \times H_1(Y;\Z[t^{\pm 1}]) \to \Q(t)/\Z[t^{\pm 1}]$$
is the sesquilinear, nonsingular,  Hermitian form defined by $\Bl_Y(x,y)=\Phi(y)(x)$.
\end{definition}

Let $M$ be a $4$-manifold with $\pi_1(M) = \Z$,  nondegenerate equivariant intersection form~$\lambda_M$ and ribbon boundary (meaning that the inclusion induced map $\pi_1(\partial M) \to \pi_1(M)$ is surjective).
We now outline why the symmetric boundary linking form~$\partial \lambda_M$ (that was described in Section~\ref{sub:Boundary}) is isometric to~$-\Bl_{\partial M}$.

As explained in~\cite[Remark 3.3]{ConwayPowell},  the connecting homomorphism~$\delta$ in the long exact sequence of the pair~$(M,\partial M)$, together with Poincar\'e duality and the evaluation map, determines an isomorphism
\begin{align*} D_M \colon \coker(\widehat{\lambda}_M) &\xrightarrow{\cong} H_1(\partial M;\Z[t^{\pm 1}]) \\
[x]  &\mapsto \delta \circ \PD \circ \ev^{-1}(x)
\end{align*}
that fits into the following commutative diagram:
\begin{equation}\label{eqn:delta-induces-map}
\xymatrix{
0\ar[r]&H_2(M;\Z[t^{\pm 1}]) \ar[r]^-{\widehat{\lambda}_M} \ar[d]_= & H_2(M;\Z[t^{\pm 1}])^* \ar[r] \ar[d]^{\PD \circ \ev^{-1}}_\cong & \coker(\widehat{\lambda}_M) \ar[r] \ar[d]^{D_M}_\cong & 0 \\
0\ar[r]& H_2(M;\Z[t^{\pm 1}]) \ar[r]^-{j}  & H_2(M,\partial M;\Z[t^{\pm 1}]) \ar[r]^-{\delta}  & H_1(\partial M;\Z[t^{\pm 1}]) \ar[r] & 0.  }
\end{equation}

\begin{proposition}[{\cite[Proposition 3.5]{ConwayPowell}}]
\label{prop:BoundaryLinkingForm3Manifold}
Let~$M$ be a~$4$-manifold with~$\pi_1(M)=\Z$, whose boundary is ribbon and with torsion Alexander module.  The isomorphism $D_M$ induces an isometry of symmetric linking forms
\[D_M \colon \partial (H_2(M;\Z[t^{\pm 1}] ),\lambda_M) = (\coker(\widehat{\lambda}_M),\partial \lambda_M) \xrightarrow{\cong} (H_1(\partial M;\Z[t^{\pm 1}] ),-\Bl_{\partial M}).\]
\end{proposition}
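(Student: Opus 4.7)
The plan is to verify the identity
\[\partial\lambda_M([x],[y]) = -\Bl_{\partial M}\bigl(D_M([x]),\, D_M([y])\bigr) \in \Q(t)/\Z[t^{\pm 1}]\]
for all $[x], [y] \in \coker(\widehat{\lambda}_M)$ by unwinding both sides and performing a diagram chase. Fix such $[x]$ and $[y]$, and choose $s \in \Z[t^{\pm 1}]$ and $z \in H_2(M;\Z[t^{\pm 1}])$ with $sx = \widehat{\lambda}_M(z)$; then $\partial\lambda_M([x],[y]) = \tfrac{1}{s}y(z)$ by Definition~\ref{def:BoundaryLinkingForm}. Set $\hat{x} := \ev^{-1}(x)$ and $\hat{y} := \ev^{-1}(y)$ in $H^2(M;\Z[t^{\pm 1}])$, and $\widetilde{x} := \PD_M(\hat{x})$ and $\widetilde{y} := \PD_M(\hat{y})$ in $H_2(M,\partial M;\Z[t^{\pm 1}])$. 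By commutativity of diagram~\eqref{eqn:delta-induces-map}, $D_M([x]) = \delta(\widetilde{x})$, $D_M([y]) = \delta(\widetilde{y})$, and the relation $sx = \widehat{\lambda}_M(z)$ translates to $s\widetilde{x} = j(z)$ in $H_2(M,\partial M;\Z[t^{\pm 1}])$.

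The second step is to reformulate the Blanchfield side using naturality of Poincaré-Lefschetz duality with respect to the long exact sequence of the pair $(M,\partial M)$; this produces a square commuting up to a universal sign:
\[\xymatrix{
H_2(M,\partial M;\Z[t^{\pm 1}]) \ar[r]^-{\delta} \ar[d]^{\PD_M^{-1}}_{\cong} & H_1(\partial M;\Z[t^{\pm 1}]) \ar[d]^{\PD_{\partial M}}_{\cong} \\
H^2(M;\Z[t^{\pm 1}]) \ar[r]^-{\iota^*} & H^2(\partial M;\Z[t^{\pm 1}]).
}\]
Combined with Definition~\ref{def:Blanchfield}, this reduces the Blanchfield value to
\[\Bl_{\partial M}\bigl(\delta \widetilde x,\, \delta \widetilde y\bigr) = \pm\bigl\langle \operatorname{BS}^{-1}_{\partial M}(\iota^*\hat{y}),\, \delta(\widetilde{x})\bigr\rangle \in \Q(t)/\Z[t^{\pm 1}],\]
where the sign is the one coming from the square.

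The third and main step is to identify this pairing with $\tfrac{1}{s}y(z)$. I would build an explicit representative of $\operatorname{BS}^{-1}_{\partial M}(\iota^*\hat{y})$ as follows: the rational cochain $\tfrac{1}{s}\hat{y} \in C^2(M;\Q(t))$ restricts to a $\Q(t)$-lift of $\iota^*\hat{y}$ on $\partial M$, so by the definition of the Bockstein connecting homomorphism, $\operatorname{BS}^{-1}_{\partial M}(\iota^*\hat{y})$ is represented by the restriction of $\tfrac{1}{s}\hat{y}$ modulo $\Z[t^{\pm 1}]$. Pairing this representative with a relative $2$-chain representing $\widetilde{x}$ whose restricted boundary realises $\delta(\widetilde{x})$, then using the chain-level consequence of $s\widetilde{x} = j(z)$, yields $\tfrac{1}{s}\hat{y}(z) = \tfrac{1}{s}y(z) \pmod{\Z[t^{\pm 1}]}$.

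The hard part will be the careful bookkeeping of the signs from the Poincaré-Lefschetz square and from the inverse Bockstein, which must combine to produce the overall minus sign appearing in the statement; this is a routine but delicate diagram chase, carried out in the general framework for boundary linking forms developed in the PowellBlanchfield reference cited just before the statement.
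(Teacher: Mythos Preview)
The paper does not prove this proposition; it simply cites \cite[Proposition~3.5]{ConwayPowell} and moves on. There is thus no in-text argument to compare your attempt against.

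Your overall strategy is the standard one, and matches what is done in the cited reference: unwind both linking forms, use the compatibility of Poincar\'e--Lefschetz duality with the long exact sequence of the pair to relate $\PD_{\partial M}\circ\delta$ with $\iota^*\circ\PD_M^{-1}$, and then identify the inverse Bockstein explicitly on cochains. The final sign check you flag is indeed the delicate part, and is exactly where the cited papers do the careful work.

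One point to tighten in your third step: the description of $\operatorname{BS}^{-1}_{\partial M}(\iota^*\hat{y})$ has a degree mismatch. The Bockstein runs $H^1(\partial M;\Q(t)/\Z[t^{\pm 1}]) \to H^2(\partial M;\Z[t^{\pm 1}])$, so a preimage of $\iota^*\hat{y}$ must be a class in degree~$1$, whereas restricting $\tfrac{1}{s}\hat{y}$ to $\partial M$ only produces a degree-$2$ $\Q(t)$-cochain. What you actually need is a $1$-cochain $d \in C^1(\partial M;\Q(t))$ with $\delta d$ equal to the restriction of a cocycle for $\hat{y}$; such $d$ exists because $H^2(\partial M;\Q(t))=0$. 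Also note that your $s$ was chosen for $x$, not for $y$, so the appearance of $\tfrac{1}{s}\hat{y}$ is not the right object; the factor $\tfrac{1}{s}$ should enter instead through the relation $s\widetilde{x}=j(z)$ when you evaluate on a chain for $\widetilde{x}$. With those corrections the computation does reduce to $\tfrac{1}{s}y(z)$ as you claim.
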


\begin{construction}
\label{cons:QuadraticRefinementOfBlanchfield}
Suppose $M$ is a spin~$4$-manifold with~$\pi_1(M) = \Z$,  whose boundary is ribbon and with torsion Alexander module.  Since $M$ is spin, the equivariant intersection form of $M$ is even. Then $\Bl_{\partial M}$ admits a preferred quadratic refinement
\begin{align*}
\mu_{\Bl_{\partial M}}  \colon H_1(\partial M;\Z[t^{\pm 1}]) &\to Q_1(\Z[t^{\pm 1}],S) \\
x &\mapsto \mu_{\partial}(D_M^{-1}(x)).
\end{align*}
Here recall from Definition~\ref{def:BoundaryLinkingForm} that $\mu_\partial$ refers to the quadratic refinement of the symmetric linking form $\partial \lambda_M$; it exists because $\lambda_M$ is even.

By construction $D_M \colon (\coker(\widehat{\lambda}_M),\partial \lambda_M,\mu_{\partial}) \xrightarrow{\cong} (H_1(\partial M;\Z[t^{\pm 1}] ),-\Bl_{\partial M},-\mu_{\Bl_{\partial M}} )$ is an isometry of quadratic linking forms.
\end{construction}

%

\subsection{Homotopy equivalences and isometries of the Blanchfield form}
\label{sub:HomotopyEquivalencesIsometry}

\purple{
Given $3$-manifolds $Y_0,Y_1$ equipped with epimorphisms $\varphi_i \colon \pi_1(Y_i) \twoheadrightarrow \Z$,  we recall when homotopy equivalences of $3$-manifolds induce isometries of the corresponding Blanchfield forms.
We then apply these considerations to boundaries of $4$-manifolds with $\pi_1=\Z$.}

\begin{proposition}[{\cite[Proposition 3.7]{ConwayPowell}}]
\label{prop:HomotopyEquivalence}
Let $Y_0,Y_1$ be $3$-manifolds equipped with epimorphisms $\varphi_i \colon \pi_1(Y_i) \twoheadrightarrow \Z$ and assume that the resulting Alexander modules are torsion for $i=0,1$.
If an orientation-preserving homotopy equivalence $f \colon Y_0 \to Y_1$ satisfies~$\varphi_1 \circ f_*=\varphi_0$ on $\pi_1(Y_0)$, then it induces an isometry between the Blanchfield forms
$$ \widetilde{f}_* \colon H_1(Y_0;\Z[t^{\pm 1}]) \to H_1(Y_1;\Z[t^{\pm 1}]).$$
\end{proposition}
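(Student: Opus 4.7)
The plan is to verify that each constituent of the composition
$\Phi = \ev \circ \operatorname{BS}^{-1} \circ \operatorname{PD}$
defining $\Bl_Y$ (as in Definition~\ref{def:Blanchfield}) is natural with respect to the maps in question, and to assemble the isometry from these naturalities.

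First, since $\varphi_1 \circ f_* = \varphi_0$, the map $f$ lifts to a $\Z$-equivariant map $\widetilde{f} \colon \widetilde{Y}_0 \to \widetilde{Y}_1$ between the infinite cyclic covers determined by $\varphi_0$ and $\varphi_1$. Because $f$ is a homotopy equivalence, so is $\widetilde{f}$, and it therefore induces a $\Z[t^{\pm 1}]$-linear isomorphism $\widetilde{f}_* \colon H_1(Y_0;\Z[t^{\pm 1}]) \xrightarrow{\cong} H_1(Y_1;\Z[t^{\pm 1}])$ on the Alexander modules. The same lift induces a map on cochains with various coefficients, with the usual naturality properties: the evaluation map is natural by its pointwise definition, and the Bockstein homomorphism is natural because it arises from the coefficient short exact sequence $0 \to \Z[t^{\pm 1}] \to \Q(t) \to \Q(t)/\Z[t^{\pm 1}] \to 0$, which is obviously preserved by any chain map.

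The remaining ingredient is naturality of Poincar\'e duality under $\widetilde{f}$. Here the hypothesis that $f$ is orientation-preserving is crucial: the orientations of $Y_0, Y_1$ lift canonically to orientations of $\widetilde{Y}_0,\widetilde{Y}_1$ as orientations of the universal free $\Z$-covers, and the lift $\widetilde{f}$ is again orientation-preserving. Consequently $\widetilde{f}_*$ sends the fundamental class of $\widetilde{Y}_0$ (in the appropriate locally finite or twisted sense) to that of $\widetilde{Y}_1$, and cap product with the fundamental class commutes with $\widetilde{f}$ up to sign $+1$. Combining these three naturalities yields the identity
\[
\widetilde{f}^*\circ \Phi_{Y_1}\circ \widetilde{f}_*=\Phi_{Y_0},
\]
where $\widetilde{f}^*$ denotes the map induced on $\overline{\Hom(H_1(\cdot;\Z[t^{\pm 1}]),\Q(t)/\Z[t^{\pm 1}])}$. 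Unpacking the definition $\Bl_Y(x,y)=\Phi(y)(x)$ turns this into the desired isometry $\Bl_{Y_1}(\widetilde{f}_*x,\widetilde{f}_*y)=\Bl_{Y_0}(x,y)$.

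The only genuine subtlety, and thus the main obstacle, is bookkeeping the orientation data through the passage to the infinite cyclic cover; once this is cleanly set up the argument is purely a diagram chase of natural transformations. An equivalent and sometimes cleaner route, which I would use if the orientation bookkeeping becomes awkward, is to model $\Bl_Y$ as the linking form associated to the symmetric Poincar\'e complex $(C_*(\widetilde{Y}),[Y]\cap-)$ over $\Z[t^{\pm 1}]$ (compare the chain-level presentation used in Section~\ref{sub:Boundary}), and to observe that $\widetilde{f}$ gives a homotopy equivalence of symmetric Poincar\'e complexes whose induced map on the linking form of $H^1$ is exactly $\widetilde{f}_*$ after identifying with $H_1$ via Poincar\'e duality.
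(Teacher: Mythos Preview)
Your proposal is correct and follows essentially the same approach as the paper. In fact, the paper does not give a detailed proof at all: it cites \cite[Proposition~3.7]{ConwayPowell} and offers only a one-sentence sketch (lift to the infinite cyclic cover using $\varphi_1 \circ f_* = \varphi_0$, then take the induced map on $H_1$), so your write-up is a strictly more detailed version of the same argument.
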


The proof of Proposition~\ref{prop:HomotopyEquivalence} is fairly straightforward: the condition that $\varphi_1 \circ f_*=\varphi_0$ ensures that $f$ lifts to the infinite cyclic covers,  and the required isometry is then obtained by taking the induced map on $H_1(-;\Z[t^{\pm 1}])$.

\begin{remark}
\label{rem:HomotopyEquivalences}
Let~$M$ and $N$ be~$4$-manifolds with fundamental group $\Z$, whose boundaries are ribbon and with torsion Alexander modules.
\begin{itemize}
\item
A consequence of Proposition~\ref{prop:HomotopyEquivalence} is that
an orientation-preserving homotopy equivalence $f \colon \partial M \to \partial N$ that intertwines the 
epimorphisms $\pi_1(\partial M) \twoheadrightarrow \pi_1(M)$ and $\pi_1(\partial N) \twoheadrightarrow \pi_1(N)$
induces an isometry $\widetilde{f}_* \colon \Bl_{\partial M} \cong \Bl_{\partial N}$.
However, in general~$\widetilde{f}_*$ need not preserve the boundary quadratic refinements.
\item Consider the group~$\hAut_{\varphi}^+(\partial M)$ of orientation-preserving homotopy equivalences $f \colon \partial M \to \partial M$ that satisfy~$\varphi \circ f_*=f_* \colon \pi_1(\partial M) \to \Z$.
We  write
$$\hAut_{\varphi}^{+,\quadr}(\partial M) \subseteq \hAut_{\varphi}^+(\partial M)$$
 for the subset consisting of homotopy equivalences such that~$\widetilde{f}_*$ preserves $\mu_{\Bl_{\partial M}}$.
Proposition~\ref{prop:HomotopyEquivalence} implies that $\hAut_{\varphi}^+(\partial M)$ acts on $\Aut(\Bl_{\partial M})$ and that $\hAut_{\varphi}^{+,\quadr}(\partial M) $ acts on~$\Aut(\Bl_{\partial M},\mu_{\Bl_{\partial M}}) \subseteq \Aut(\Bl_{\partial M}).$
\item In fact~$\Aut(\Bl_{\partial M})$  and $\Aut(\Bl_{\partial M},\mu_{\Bl_{\partial M}})$ also
admit actions of $\Aut(\lambda_M)$.
In more detail, one uses $D_M$ to transport the action on $\Aut(\partial \lambda_M)$ (recall Remark~\ref{rem:ActionIsometry}) to an action on $ \Aut(\Bl_{\partial M})$: the action of~$F$ on~$h \in \Aut(\Bl_{\partial M})$ is by~$F \cdot h:=h \circ (D_M \circ \partial F \circ D_M^{-1})$.
Since $\partial F$ also preserves $\mu_{\partial}$ it follows that $\Aut(\partial \lambda_M)$ also acts on $\Aut(\Bl_{\partial M},\mu_{\Bl_{\partial M}}) \subseteq \Aut(\Bl_{\partial M})$.

%

\item Remark~\ref{remark:inclusion-of-automorphisms-quad-symm} leads to an example where
$\mu_{\Bl_{\partial M}}$, the quadratic refinement of the Blanchfield form, depends on the choice of coboundary $M$.
Consider the $4$-manifold $M:= (S^2 \widetilde{\times}_8 D^2) \natural (S^1 \times D^3)$, where
$S^2 \widetilde{\times}_8 D^2$ denotes the total space of the $D^2$-bundle over $S^2$ with Euler number $8$. Multiplication by 3 induces an automorphism of the symmetric linking form of $\partial M = L(8,1) \# (S^1 \times S^2)$ that is not induced by $\Aut(\lambda_M)$ nor by $\Homeo^+_{\varphi}(\partial M)$. Hence \cite{ConwayPiccirilloPowell} implies there is a non-homeomorphic 4-manifold $M'$ with the same boundary and intersection form as $M$. The quadratic refinements of $\Bl_{\partial M}$ induced by $M$ and $M'$ do not even lie in the same $\Homeo^+_{\varphi}(\partial M)$ orbits, and so depend strongly on the choice of coboundary.
\item It is an interesting question whether it is possible to define the quadratic refinement on $\Bl_{\partial M}$  intrinsically, using only the spin structure $M$ induces on $\partial M$. An analogue of this result is known for the standard linking form on $\partial M$; see~\cite[Sections 2.4 \& 2.5]{Deloup-Massuyeau}.
We leave this problem for later work.
\end{itemize}
\end{remark}

\subsection{Unions of spin $4$-manifolds}
\label{sub:unions-of-spin}

We describe how the union construction from Section~\ref{sub:Unions} and quadratic isometries of the Blanchfield form can be used to ensure that certain unions of spin $4$-manifolds with fundamental group $\Z$ remain spin.
\medbreak

In order to cut down on notation, we identify $(H_1(\partial M;\Z[t^{\pm 1}]),-\Bl_{\partial M},-\mu_{\Bl_{\partial M}})$ with $(\coker(\widehat{\lambda}_M),\partial \lambda_M,\mu_{\partial})$ i.e. we temporarily omit the isometry $D_M$ mentioned in Proposition~\ref{prop:BoundaryLinkingForm3Manifold} from the notation.
In particular, given an isometry $h \colon \Bl_{\partial M} \cong \Bl_{\partial N}$, we  allow ourselves to write $\lambda_M \cup_{h} -\lambda_N$.

The next proposition recalls how the algebraic union can be used to understand the equivariant intersection form of a union of two $4$-manifolds with fundamental group $\Z$.

\begin{lemma}[{\cite[Proposition 3.9]{ConwayPowell}}]
\label{lem:AlgTopUnion}
Let $M$ and $N$ be two $4$-manifolds with fundamental group $\Z$,  nondegenerate equivariant intersection forms, and whose boundaries are ribbon.
If there is an orientation-preserving homeomorphism  $f \colon \partial M_0 \cong \partial M_1$ that interwines the inclusion-induced epimorphisms $\pi_1(\partial M_0) \twoheadrightarrow \pi_1(M_0)$ and $\pi_1(\partial M_1) \twoheadrightarrow \pi_1(M_1)$,  then there is an isometry
$$\lambda_{M_0} \cup_{\widetilde{f}_*} -\lambda_{M_1} \cong \lambda_{M_0 \cup_f -M_1}$$
\end{lemma}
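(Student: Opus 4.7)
Write $W := M_0 \cup_f -M_1$. Since both $M_i$ have $\pi_1(M_i) = \Z$ and ribbon boundary, and $f$ intertwines the inclusion-induced epimorphisms $\pi_1(\partial M_i) \twoheadrightarrow \Z$, van Kampen gives $\pi_1(W) = \Z$, so $\lambda_W$ is a well-defined Hermitian form on $H_2(W;\Z[t^{\pm 1}])$. The plan is to first identify $H_2(W;\Z[t^{\pm 1}])$ with the kernel
\[
\ker\bigl(\widetilde{f}_* \circ \pi_0 - \pi_1 \colon H_2(M_0;\Z[t^{\pm 1}])^* \oplus H_2(M_1;\Z[t^{\pm 1}])^* \to H_1(\partial M_1;\Z[t^{\pm 1}])\bigr)
\]
appearing in Construction~\ref{construction:Gluing}, and then to verify that $\lambda_W$ is given by the formula $\smfrac{1}{s_0}y_0(z_0) - \smfrac{1}{s_1}y_1(z_1)$ of that construction.

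To build the module-level isomorphism, the main tool is the Mayer--Vietoris sequence for $W = M_0 \cup_f -M_1$ with $\Z[t^{\pm 1}]$ coefficients, combined with the long exact sequences of the pairs $(M_i,\partial M_i)$ and Poincar\'e--Lefschetz duality $H_2(M_i,\partial M_i;\Z[t^{\pm 1}]) \cong H_2(M_i;\Z[t^{\pm 1}])^*$ (the relevant Ext term vanishes because the Alexander modules of the boundaries are torsion). Splicing these with diagram~\eqref{eqn:delta-induces-map} for each $M_i$ and performing a diagram chase, one identifies the Mayer--Vietoris connecting map $H_2(W;\Z[t^{\pm 1}]) \to H_1(\partial M_0;\Z[t^{\pm 1}])$ with the restriction of $\widetilde{f}_* \circ \pi_0 - \pi_1$, and the image of $H_2(M_0;\Z[t^{\pm 1}]) \oplus H_2(M_1;\Z[t^{\pm 1}])$ with the image of $(\widehat{\lambda}_{M_0},\widehat{\lambda}_{M_1})$.

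Next, to check that this module isomorphism is in fact an isometry, I would represent a class in $H_2(W;\Z[t^{\pm 1}])$ by an immersed surface $\Sigma$ in the infinite cyclic cover of $W$ and cut it along a preimage of $\partial M_0$ into pieces $\Sigma_i \subseteq M_i$. Each $\Sigma_i$ determines a relative class in $H_2(M_i,\partial M_i;\Z[t^{\pm 1}])$ and hence, via Poincar\'e duality, an element $x_i \in H_2(M_i;\Z[t^{\pm 1}])^*$; the matching condition $\widetilde{f}_* \pi_0(x_0) = \pi_1(x_1)$ is precisely that the boundary curves of $\Sigma_0$ and $\Sigma_1$ agree under $f$. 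The equivariant self-intersection of $\Sigma$ in $W$ splits as a sum of equivariant self-intersections of $\Sigma_0$ in $M_0$ and $\Sigma_1$ in $-M_1$, and writing $\widehat{\lambda}_{M_i}(z_i) = s_ix_i$ for some $s_i \in \Z[t^{\pm 1}]$ and $z_i \in H_2(M_i;\Z[t^{\pm 1}])$ reproduces the formula $\smfrac{1}{s_0}y_0(z_0) - \smfrac{1}{s_1}y_1(z_1)$.

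The main obstacle is the bookkeeping in the middle step: identifying the Mayer--Vietoris connecting map with $\widetilde{f}_* \circ \pi_0 - \pi_1$ requires simultaneously keeping track of three descriptions of $H_1(\partial M_0;\Z[t^{\pm 1}])$, namely as the Mayer--Vietoris connecting term, as $\coker(\widehat{\lambda}_{M_0})$ via $D_{M_0}$, and as $\coker(\widehat{\lambda}_{M_1})$ via $D_{M_1} \circ \widetilde{f}_*^{-1}$, together with verifying the signs coming from the orientation reversal on $-M_1$.
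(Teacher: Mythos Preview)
The paper does not actually prove this lemma: it is stated with a citation to \cite[Proposition~3.9]{ConwayPowell} and no proof is given here. So there is no ``paper's own proof'' to compare against; your sketch is an attempt at what the cited reference does.

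That said, your outline is broadly the right shape. One point to tighten: in the second step you say ``the equivariant self-intersection of $\Sigma$ in $W$ splits as a sum of equivariant self-intersections of $\Sigma_0$ in $M_0$ and $\Sigma_1$ in $-M_1$.'' The pieces $\Sigma_i$ have nonempty boundary, so they do not represent absolute classes and there is no self-intersection in the usual sense. What you actually want is the relative/absolute intersection pairing $H_2(M_i,\partial M_i;\Z[t^{\pm 1}]) \times H_2(M_i;\Z[t^{\pm 1}]) \to \Z[t^{\pm 1}]$: given $(x_0,x_1)$ and $(y_0,y_1)$ in the kernel, one pairs the relative class $x_i$ (coming from $\Sigma_i$) against an absolute class $z_i/s_i$ with $\widehat{\lambda}_{M_i}(z_i) = s_i y_i$. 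Phrased this way the formula $\smfrac{1}{s_0}y_0(z_0) - \smfrac{1}{s_1}y_1(z_1)$ drops out directly from Poincar\'e--Lefschetz duality rather than from an informal ``cut and add self-intersections'' picture. Similarly, the module-level identification is cleaner if you use excision $H_2(W,M_{1-i};\Z[t^{\pm 1}]) \cong H_2(M_i,\partial M_i;\Z[t^{\pm 1}])$ together with the map $H_2(W) \to H_2(W,M_0) \oplus H_2(W,M_1)$, rather than the raw Mayer--Vietoris connecting homomorphism; this makes the target $H_2(M_0)^* \oplus H_2(M_1)^*$ appear without the diagram chase you flag as the main obstacle.
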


Note that the  intersection form being nondegenerate implies that the Alexander modules are~$\Z[t^{\pm 1}]$-torsion, via the long exact sequences of the pairs $(M,\partial M)$ and $(N,\partial N)$ with $\Z[t^{\pm 1}]$ coefficients.

\begin{proposition}
\label{prop:CPPSpinUnion}
Let $M$ and $N$ be spin $4$-manifolds with fundamental group $\Z$, nondegenerate equivariant intersection forms,  and whose boundaries are ribbon.
If $\lambda_M \cong \lambda_N$ and $f \colon \partial M\cong \partial N$ is a homeomorphism that induces an isometry between the boundary quadratic linking forms of $\partial M$ and $\partial N$, then $M \cup_{f} -N$ is spin.
\end{proposition}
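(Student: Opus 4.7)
The strategy is to translate the geometric gluing into algebra via the lemmas of Section~\ref{sub:Unions}, then deduce spinness from evenness of the resulting equivariant intersection form. First, Lemma~\ref{lem:AlgTopUnion} provides an isometry $\lambda_{M\cup_f -N} \cong \lambda_M\cup_{\widetilde{f}_*} -\lambda_N$. Since $M$ and $N$ are spin, their equivariant intersection forms are even. The hypothesis that $f$ induces an isometry of the boundary quadratic linking forms, combined with Proposition~\ref{prop:BoundaryLinkingForm3Manifold} and Construction~\ref{cons:QuadraticRefinementOfBlanchfield}, translates into $\widetilde{f}_*$ being an isometry of the boundary quadratic linking forms $(\coker(\widehat{\lambda}_M),\partial\lambda_M,\mu_\partial)$ and $(\coker(\widehat{\lambda}_N),\partial\lambda_N,\mu_\partial)$. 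Lemma~\ref{lem:UnionEven} then gives that $\lambda_X$ is even, where $X:=M\cup_f -N$.

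Next I would analyse $X$ using van Kampen and twisted Mayer--Vietoris. The ribbon boundary hypothesis, together with $f$ intertwining the surjections $\pi_1(\partial M)\twoheadrightarrow \pi_1(M)=\Z$ and $\pi_1(\partial N)\twoheadrightarrow \pi_1(N)=\Z$, shows via the pushout computation that $\pi_1(X)=\Z$. Since $\pi_1(M)=\pi_1(N)=\Z$, the infinite cyclic covers $\widetilde M$ and $\widetilde N$ coincide with the universal covers, so $H_1(M;\Z[t^{\pm 1}]) = H_1(N;\Z[t^{\pm 1}])=0$. A Mayer--Vietoris calculation with $\Z[t^{\pm 1}]$ coefficients, using injectivity of the map $H_0(\partial M;\Z[t^{\pm 1}])=\Z \to H_0(M;\Z[t^{\pm 1}])\oplus H_0(N;\Z[t^{\pm 1}]) = \Z\oplus\Z$, then yields $H_1(X;\Z[t^{\pm 1}])=0$.

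Finally I would combine these ingredients to conclude $X$ is spin. The Wang sequence for the infinite cyclic cover $\widetilde X\to X$, together with the vanishing $H_1(\widetilde X;\Z) = H_1(X;\Z[t^{\pm 1}])=0$, gives an identification $H_2(X;\Z) = H_2(X;\Z[t^{\pm 1}])\otimes_{\Z[t^{\pm 1}]} \Z$. Hence the integral intersection form of $X$ is the augmentation of $\lambda_X$, and is therefore even. Since $H_1(X;\Z)=\Z$ is torsion-free, the reduction $H^2(X;\Z)\to H^2(X;\Z/2)$ is surjective, so the mod $2$ intersection form of $X$ is also even. By Wu's formula $w_2(X)\cdot x \equiv x\cdot x \pmod 2$, this forces $w_2(X)=0$, whence $X$ is spin. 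The main obstacle is the passage from even equivariant form to even integral form, which is precisely what necessitates the Alexander module vanishing; the rest is a standard application of Wu's formula.
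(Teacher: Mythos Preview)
Your proof is correct and follows essentially the same route as the paper: both establish that $\lambda_{M\cup_f -N}$ is even via Lemmas~\ref{lem:AlgTopUnion} and~\ref{lem:UnionEven}, and then deduce spinness from the fact that $\pi_1(M\cup_f -N)=\Z$ has no $2$-torsion. The only difference is that the paper invokes this last implication as a known fact (citing that $\pi_1=\Z$ via~\cite[Proposition~3.8]{ConwayPowell}), whereas you unpack it explicitly through the Wang sequence, augmentation, and Wu's formula; your expanded argument is correct and is precisely the content of the assertion the paper quotes.
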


\begin{proof}
Pick an isometry $F \colon \lambda_M  \cong \lambda_N$.
Applying successively Lemma~\ref{lem:AlgTopUnion} and Lemma~\ref{lem:PropertiesUnion} we obtain
$$\lambda_{M \cup_{f} -N}
 \cong  \lambda_M  \cup_{\wt{f}_*} -\lambda_N
 \cong \lambda_N \cup_{\wt{f}_* \circ \partial F^{-1}}  -\lambda_N.
 $$
Since $\wt{f}_*$ and $\partial F$ preserve the quadratic linking forms, so does $\wt{f}_* \circ \partial F^{-1}$.

As $N$ is spin, it follows that $\lambda_N$ is even.
Lemma~\ref{lem:UnionEven} implies that so is~$\lambda_N \cup_{\wt{f}_* \circ \partial F^{-1}}  -\lambda_N$. Therefore $\lambda_{M \cup_{f} -N}$ is even.
Since  the fundamental group of $M \cup_{f} -N$ is $\Z$ (see e.g.~\cite[Proposition 3.8]{ConwayPowell}) and therefore has no $2$-torsion, this implies that $M \cup_f -N$ is spin, as required.
\end{proof}

\section{Stable homeomorphism}
\label{sec:StableHomeo}

In this section,  we collect some facts about stable homeomorphism of $4$-manifolds with fundamental group $\Z$ and nonempty boundary.
We then focus on the case of $4$-manifolds whose boundary is ribbon and has torsion Alexander module.
\medbreak

Given two $4$-manifolds $M$ and $N$ with fundamental group $\pi_1(M) = \Z = \pi_1(N)$ that are either both spin or both nonspin,  we call a homeomorphism $f \colon \partial M \xrightarrow{\cong} \partial N$ a \emph{$B$-compatible} homeomorphism if the diagram
$$
\xymatrix@R0.5cm{
\pi_1(\partial M) \ar[rr]^{f_*}\ar[d]&& \pi_1(\partial N)\ar[d] \\
\pi_1(M)\ar[r]^= & \Z & \pi_1( N)\ar[l]_= \\
}
$$
commutes, and if,  in the case that $M$ and $N$ are spin, the union $M \cup_f -N$ is spin.
We write $\partial M \cong_B \partial N$ if such a homeomorphism exists, and say that $\partial M$ and $\partial N$ are \emph{$B$-homeomorphic}.

The terminology `$B$-compatible' and `$B$-homeomorphic' is motivated by our use of modified surgery below, where $B$ is shorthand for the normal 1-type of the manifolds involved.

\begin{lemma}
\label{lem:Kreck}
Let $M$ and $N$ be two $4$-manifolds with fundamental group $\Z$ and nonempty $B$-homeomorphic boundaries.  Suppose that $M$ and $N$ have the same Kirby-Siebenmann invariant and isometric equivariant intersection forms. Then $M$ and $N$ are stably homeomorphic.

In particular,  there is an equality
\[  \mathcal{S}^{\st}_{h^+,\lambda}(M)= \frac{\{N  \mid \partial N \cong_{\purple{B}} \partial M, \pi_1(N) = \Z,  \lambda_N \cong \lambda_M, \ks(N)=\ks(M) \}}{\text{o.p.~homotopy eq. of pairs}}.\]
\end{lemma}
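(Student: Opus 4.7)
The plan is to apply Kreck's modified surgery theory~\cite[Theorem~2]{KreckSurgeryAndDuality} for $4$-manifolds with boundary. Once one fixes a common normal $1$-type $B$ and compatible $B$-structures on $M$ and $N$ agreeing across $f$, the closed $4$-manifold $W := M \cup_f (-N)$ inherits an induced $B$-structure, and the obstruction to a stable homeomorphism $M \cong_{\st} N$ is precisely the bordism class $[W] \in \Omega_4(B)$. The plan is to verify that the hypotheses force this obstruction to vanish.

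I would begin by identifying $B$. Since $\pi_1(M) = \Z = \pi_1(N)$ and, by the existence of a $B$-compatible homeomorphism of boundaries, either both manifolds are spin or neither is, the common normal $1$-type is $B = S^1 \times BSTOP$ in the non-spin case and $B = S^1 \times BTOPSpin$ in the spin case. Choose $B$-structures on $M$ and $N$: the $\pi_1$-intertwining part of $B$-compatibility ensures that the lifts to $S^1$ agree along $f$, and the spin compatibility part (that $M \cup_f -N$ be spin) ensures that in the spin case the spin structures also match up. Together these produce a $B$-structure on $W$.

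Next I would compute $[W] \in \Omega_4(B)$. The signature satisfies $\sigma(W) = \sigma(M) - \sigma(N) = 0$, because the isometry $\lambda_M \cong \lambda_N$ of equivariant forms specialises via $t \mapsto 1$ to an isometry of the underlying integral intersection forms. The Kirby--Siebenmann invariant satisfies $\ks(W) = \ks(M) + \ks(N) = 0 \in \Z/2$ by hypothesis. Using $\Omega_3^{STOP} = 0 = \Omega_3^{TOPSpin}$, the Atiyah--Hirzebruch spectral sequence gives
\[
\Omega_4(S^1 \times BSTOP) \cong \Z \oplus \Z/2 \quad \text{and} \quad \Omega_4(S^1 \times BTOPSpin) \cong \Z,
\]
detected by $(\sigma,\ks)$ and by $\sigma$ respectively. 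Hence $[W] = 0$, and Kreck's theorem yields $M \cong_{\st} N$.

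The main subtle point is ensuring that the chosen $B$-structures on $M$ and $N$ extend across $f$ to a $B$-structure on $W$. In the non-spin case this reduces to matching the $\pi_1$ maps, which is automatic; in the spin case, the extra spin compatibility condition built into the definition of $B$-homeomorphism does the job. Finally, for the displayed equality of sets, the inclusion $\supseteq$ is the first assertion of the lemma. For $\subseteq$, any stable homeomorphism $M \# W_g \cong N \# W_g$ restricts to a $B$-compatible homeomorphism $\partial M \cong \partial N$ (the $\pi_1$ and spin compatibilities follow because $W_g$ affects neither the boundary $\pi_1$ nor the existence of the relevant spin structure on the double), while $\pi_1$, $\ks$, and $\lambda$ are preserved by hypothesis.
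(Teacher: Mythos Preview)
Your proposal is correct and follows essentially the same route as the paper: invoke Kreck's Theorem~2, identify the normal $1$-type as $S^1 \times BSTOP$ or $S^1 \times BTOPSpin$, compute the relevant bordism groups as $\Z \oplus \Z/2$ and $\Z$ detected by $(\sigma,\ks)$ and $\sigma$ respectively, and then use additivity of signature and of the Kirby--Siebenmann invariant to conclude that $[M \cup_f -N]=0$. Your treatment is slightly more explicit about the $B$-structure matching and about the two inclusions in the displayed set equality, but the substance is the same.
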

\begin{proof}
Work of Kreck~\cite[Theorem 2]{KreckSurgeryAndDuality} ensures that $M$ and $N$ are stably homeomorphic if and only if there is a $B$-compatible homeomorphism $f \colon \partial M \xrightarrow {\cong}\partial N$ such that the union~$M \cup_f-N$ vanishes in the bordism group $\Omega_4^{\operatorname{STOP}}(S^1)\cong \Z \oplus \Z_2$ (when $M$ and $N$ are non-spin) or $\Omega_4^{\operatorname{TOPSpin}}(S^1)\cong \Z$ (when $M$ and $N$ are spin).
In the non-spin case, this bordism group is detected by the signature and Kirby-Siebenmann invariant whereas in the spin case, it is detected by the signature alone.
Since $\lambda_M \cong \lambda_N$,  we deduce that $M$ and $N$ have the same signature and Wall additivity implies that the union~$M \cup_f-N$ has vanishing signature.
The fact that $M \cup_f-N$ has vanishing Kirby-Siebenmann invariant follows from the additivity of this invariant; see e.g.~\cite[Theorem 8.2]{FriedlNagelOrsonPowell}.
\end{proof}

Given a $4$-manifold $M$ with $\pi_1(M) = \Z$,  nondegenerate equivariant intersection form and ribbon boundary,  as we will describe below,  the methods of~\cite{ConwayPiccirilloPowell} produce $4$-manifolds $N$ with $\pi_1(N) = \Z$, ribbon boundary, $\lambda_M \cong \lambda_N$, $\ks(M)=\ks(N)$ and a preferred identification $g \colon \partial M \cong \partial N$.
Since $\lambda_M \cong \lambda_N$ and $\pi_1(M)= \Z =\pi_1(N)$ has no $2$-torsion, either~$M$ and~$N$ are both spin or both nonspin.
By Lemma~\ref{lem:Kreck}, in order to prove that~$M$ and $N$ are stably homeomorphic,  it therefore suffices to know that in the spin case, the union~$M \cup_{g} -N$ is spin, for which we use Proposition~\ref{prop:CPPSpinUnion}.

\begin{proposition}
\label{prop:CPPStablyHomeo}
Let $M$ and $N$ be spin $4$-manifolds with fundamental group $\Z$, nondegenerate equivariant intersection forms,  and whose boundaries are ribbon.
If $\lambda_M \cong \lambda_N$, and $g \colon \partial M\cong \partial N$ is a homeomorphism that induces an isometry between the boundary quadratic linking forms of $\partial M$ and $\partial N$,
then $M$ and $N$ are stably homeomorphic.
\end{proposition}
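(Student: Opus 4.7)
The plan is to follow the strategy used in the proof of Lemma~\ref{lem:Kreck} and apply Kreck's modified surgery theorem~\cite[Theorem 2]{KreckSurgeryAndDuality} directly. This requires verifying (i) that $g$ is $B$-compatible, and (ii) that the closed manifold $M \cup_g -N$ represents zero in the spin bordism group $\Omega_4^{\operatorname{TOPSpin}}(S^1) \cong \Z$.

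For (i), note that the hypothesis that $g$ induces an isometry of Blanchfield forms implicitly requires $g$ to intertwine the inclusion-induced epimorphisms $\pi_1(\partial M) \twoheadrightarrow \pi_1(M) = \Z$ and $\pi_1(\partial N) \twoheadrightarrow \pi_1(N) = \Z$, since otherwise $\widetilde{g}_*$ would not be defined (see Proposition~\ref{prop:HomotopyEquivalence}). This gives the $\pi_1$-compatibility half of $B$-compatibility. For the spin half, the fact that $g$ additionally preserves the quadratic refinements of $\Bl_{\partial M}$ and $\Bl_{\partial N}$, together with $\lambda_M \cong \lambda_N$, is precisely the input required by Proposition~\ref{prop:CPPSpinUnion}, which will then yield that $M \cup_g -N$ is spin.

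For (ii), since $\Omega_4^{\operatorname{TOPSpin}}(S^1) \cong \Z$ is detected by signature alone, it suffices to show $\sigma(M \cup_g -N) = 0$. By Wall additivity, $\sigma(M \cup_g -N) = \sigma(M) - \sigma(N)$, and these signatures agree because $\lambda_M \cong \lambda_N$ ensures agreement of the integral intersection forms and hence of their signatures. Kreck's theorem then immediately yields a stable homeomorphism between $M$ and $N$.

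The only substantive obstacle — that the union is spin — has already been isolated as Proposition~\ref{prop:CPPSpinUnion}, whose proof rests on Lemma~\ref{lem:UnionEven} (the algebraic union of two even Hermitian forms along a quadratic isometry of their boundary linking forms is again even). Once spinness of the union is in hand, the remainder of the argument is purely formal. I expect the written-up proof to be essentially a two- or three-line assembly of Proposition~\ref{prop:CPPSpinUnion}, Wall additivity, and Kreck's theorem.
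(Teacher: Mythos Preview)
Your proposal is correct and follows essentially the same approach as the paper. The paper's proof is even more compressed: it simply observes that Proposition~\ref{prop:CPPSpinUnion} gives $B$-compatibility of $g$, and then cites Lemma~\ref{lem:Kreck} directly (which already packages the Kreck theorem, Wall additivity, and the signature detection of $\Omega_4^{\operatorname{TOPSpin}}(S^1)$) rather than unpacking that argument again as you do.
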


\begin{proof}
Proposition~\ref{prop:CPPSpinUnion} implies that the homeomorphism $\partial M \cong \partial N$ is $B$-compatible and the result therefore follows from  Lemma~\ref{lem:Kreck}.
\end{proof}


\color{black}

\section{From stable homeomorphism to isometries of the Blanchfield form}
\label{sec:Reduc}

In this section,~$M$ denotes a \purple{spin}~$4$-manifold with an identification~$\pi_1(M) = \Z$,  ribbon boundary (the inclusion induced map~$\varphi \colon \pi_1(\partial M) \to \pi_1(M)$ is surjective), and nondegenerate equivariant intersection form~$\lambda_M$.
\purple{As we recalled in Section~\ref{sec:Blanchfield},}
since~$\lambda_M$ is nondegenerate,  the Alexander module~$H_1(\partial M;\Z[t^{\pm 1}])$ is torsion and supports the Blanchfield form,
$$\Bl_{\partial M} \colon H_1(\partial M;\Z[t^{\pm 1}]) \times H_1(\partial M;\Z[t^{\pm 1}]) \to \Q(t)/\Z[t^{\pm 1}],$$
which is nonsingular, sesquilinear, and Hermitian.
The goal of this section is to describe in more detail the set~$\Aut(\Bl_{\partial M},\purple{\mu_{\Bl_{\partial M}}})/(\Aut(\lambda_M) \times \hAut_{\varphi}^{+,\purple{\quadr}}(\partial M))$ that was mentioned in the introduction
and then to prove Proposition~\ref{prop:SurjectionOntoAlgebraIntro}.
\medbreak
We start by
\purple{recalling} the aforementioned actions of~$\hAut_{\varphi}^{+,\purple{\quadr}}(\partial M)$ and~$\Aut(\lambda_M)$ on the group~$\Aut(\Bl_{\partial M},\purple{\mu_{\Bl_{\partial M}}})$ of isometries of the \purple{quadratic refinement of the} Blanchfield form of~$\partial M$ \purple{induced by the even form $\lambda_M$.}
Here recall that~$\hAut_{\varphi}^{+,\purple{\quadr}}(\partial M)$ denotes the group of orientation-preserving homotopy equivalences~$f \colon \partial M \to \partial M$ that satisfy~$\varphi \circ f_*=f_* \colon \pi_1(\partial M) \to \Z$ \purple{and preserve $\purple{\mu_{\Bl_{\partial M}}}$}, while~$\Aut(\lambda_M)$ denotes the set of isometries of~$\lambda_M$.
\begin{itemize}
\item We
\purple{recall} the action of~$\hAut_{\varphi}^{+,\purple{\quadr}}(\partial M)$ on~$\Aut(\Bl_{\partial M},\purple{\mu_{\Bl_{\partial M}}})$.
Since any homotopy equivalence~$f \in \hAut_{\varphi}^+(\partial M)$ satisfies~$\varphi \circ f_*=f_*$, it lifts to a homotopy equivalence~$\widetilde{f}$ on the~$\Z$-covers that induces a~$\Z[t^{\pm 1}]$-linear map on homology; we denote this map by~$\widetilde{f}_*$.
Since~$f$ is orientation-preserving, so is~$\widetilde{f}$ and it follows that~$\widetilde{f}_*$ is an isometry of the Blanchfield form.
The action of~$f$ on~$h \in \Aut(\Bl_{\partial M},\purple{\mu_{\Bl_{\partial M}}})$ is then by~$f \cdot h=\widetilde{f}_* \circ h$.
\item We describe the action of~$\Aut(\lambda_M)$ on~$\Aut(\Bl_{\partial M},\purple{\mu_{\Bl_{\partial M}}})$.
\purple{Recall from Section~\ref{sec:Blanchfield} that}
the \purple{even} Hermitian form~$\lambda_M$ determines an adjoint map $\widehat{\lambda}_M \colon H_2(M;\Z[t^{\pm 1}]) \to H_2(M;\Z[t^{\pm 1}])^*$, and a~$\Q(t)/\Z[t^{\pm 1}]$-valued \purple{quadratic} linking form
\begin{align}
\label{eq:BoundaryLinkingForm}
\partial \lambda_M \colon \coker(\widehat{\lambda}_M) \times \coker(\widehat{\lambda}_M) &\to \Q(t)/\Z[t^{\pm 1}];\;\; \partial \lambda_M([x],[y])=y(z)/p, \nonumber \\
\purple{\mu_{\partial} \colon \coker(\widehat{\lambda}_M)} &\to Q_+(\Z[t^{\pm 1}],S); \;\; \mu_{\partial}(\purple{[x]})=\purple{x}(z)/p.
\end{align}
Here~$p \in \Z[t^{\pm 1}]$ and $z \in H_2(M;\Z[t^{\pm 1}])$ satisfy~$px=\widehat{\lambda}_M(z)$.
\purple{In Section~\ref{sec:Blanchfield} we also recalled the definition of the isometry}
$$D_M \colon -\partial \lambda_M \cong \Bl_{\partial M},$$
\purple{and that} an isometry~$F \in \Aut(\lambda_M)$
\color{black}
induces an isometries~$\partial F \colon -\partial \lambda_M \cong -\Bl_{\partial M}$ and~$\partial F \colon \mu_{\partial} \cong  -\mu_{\Bl_{\partial M}}$
\color{black}
by noting that the isomorphism~$(F^*)^{-1}$ descends to an isometry on the cokernels.
The action of~$F$ on~$h \in \Aut(\Bl_{\partial M},\purple{\mu_{\Bl_{\partial M}}})$ is then by~$F \cdot h:=h \circ (D_M \circ \partial F \circ D_M^{-1})$.
\end{itemize}

Note that the actions commute, because one acts by pre-composition and the other acts by post-composition.
We obtain an action of the product $\Aut(\lambda_M) \times \purple{\hAut_{\varphi}^{+,\purple{\quadr}}(\partial M)}$ on $\Aut(\Bl_{\partial M},\purple{\mu_{\Bl_{\partial M}}})$.
Now that we have made sense of the orbit set $\Aut(\Bl_{\partial M},\purple{\mu_{\Bl_{\partial M}}})/(\Aut(\lambda_M) \times \hAut_{\varphi}^{+,\purple{\quadr}}(\partial M)$, we describe its relation to the homotopy stable class of~$M$.
Recall from \purple{Lemma~\ref{lem:Kreck} that}
\begin{equation}
\label{eq:DescriptionOfStable}
 \mathcal{S}^{\st}_{h^+,\lambda}(M) = \frac{ \{N  \mid \partial N \cong_{\purple{B}} \partial M,
\pi_1(N) = \Z,
\lambda_N \cong \lambda_M, \ks(N)=\ks(M) \}}{\text{o.p.\ hom.\ equiv.\ of pairs}}.\end{equation}
In order to relate~$\mathcal{S}^{\st}_{h^+,\lambda}(M)$ to the orbit set~$\Aut(\Bl_{\partial M},\purple{\mu_{\Bl_{\partial M}}})/(\Aut(\lambda_M) \times \hAut_{\varphi}^{+,\purple{\quadr}}(\partial M))$, we recall a construction from~\cite[Construction 1]{ConwayPiccirilloPowell} which has its origins in work of Boyer~\cite{BoyerRealization}.

\begin{construction}
\label{cons:CPP22}
We describe a map~$b \colon \mathcal{S}_{h^+,\lambda}^{\st}(M) \to \Aut(\Bl_{\partial M})/(\Aut(\lambda_M) \times \hAut_{\varphi}^+(\partial M))$.
Given a $4$-manifold~$N \in  \mathcal{S}^{\st}_{h^+,\lambda}(M)$,  pick a homeomorphism~$g \colon \partial N \cong \partial M$
and an isometry~$F \colon \lambda_M \cong \lambda_N$.
Since $N \in  \mathcal{S}^{\st}_{h^+,\lambda}(M)$,  it also has ribbon boundary and torsion Alexander module,  thus ensuring that the isometry $D_N \colon -\partial \lambda_N \cong \Bl_{\partial N}$ is defined.
Now set
\[b(N):=g_* \circ D_N \circ \partial F \in \Aut(\Bl_{\partial M})/(\Aut(\lambda_M) \times \hAut_{\varphi}^+(\partial M)).\]
One verifies that $b(N)$ is independent of the choices of~$F,g$ and the orientation-preserving homotopy equivalence class of~$(N,\partial N)$.
See \cite{ConwayPiccirilloPowell} for details.

\color{black}
Suppose that in addition we assume the homeomorphism $g$ is such that the induces homomorphism $g_* \colon H_1(\partial N;\Z[t^{\pm 1}]) \to H_1(\partial M;\Z[t^{\pm 1}])$ intertwines the quadratic refinements $\mu_{\Bl_{\partial N}}$ and~$\mu_{\Bl_{\partial M}}$.
\purple{To see that such a homeomorphism exists,  restrict a stable homeomorphism $\Phi \colon M \# W_r \cong N \# W_r$ to the boundaries:
this will intertwine the quadratic refinements because stabilising a Hermitian form by $\bsm 0&1 \\ 1&0 \esm$ does not affect the boundary quadratic linking form.}
Since both $D_N$ and~$\partial F$ preserve the quadratic refinements, it follows that $b$ in fact defines a map
$$b \colon \mathcal{S}_{h^+,\lambda}^{\st}(M) \to \Aut(\Bl_{\partial M},\mu_{\Bl_{\partial M}})/(\Aut(\lambda_M) \times \hAut_{\varphi}^{+,\purple{\quadr}}(\partial M)).$$
\color{black}
\end{construction}

The next proposition proves Proposition~\ref{prop:SurjectionOntoAlgebraIntro} from the introduction.

 \begin{proposition}
 \label{prop:SurjectionOntoAlgebra}
 If~$M$ is a \purple{spin}~$4$-manifold with~$\pi_1(M) = \Z$,  ribbon boundary and nondegenerate equivariant intersection form~$\lambda_M$,  then the map~$b$ from Construction~\ref{cons:CPP22} defines a surjection
$$ b\colon  \mathcal{S}_{h^+ ,\lambda}^{\st}(M) \twoheadrightarrow  \Aut(\Bl_{\partial M},\purple{\mu_{\Bl_{\partial M}}})/(\Aut(\lambda_M)\times \hAut_{\varphi}^{+,\purple{\quadr}}(\partial M)).$$
\end{proposition}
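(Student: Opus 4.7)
The plan is to exploit the realisation result of~\cite{ConwayPiccirilloPowell} together with the spin-compatibility package assembled in Sections~\ref{sec:LinkingForms}--\ref{sec:StableHomeo}. Well-definedness of $b$ was already addressed in Construction~\ref{cons:CPP22}, so what remains is surjectivity: given a representative $h \in \Aut(\Bl_{\partial M},\mu_{\Bl_{\partial M}})$ of an arbitrary class in the target orbit set, the goal is to produce a manifold $N \in \mathcal{S}_{h^+,\lambda}^{\st}(M)$ with $b(N) = [h]$.

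First I would feed $h$ into the realisation theorem of~\cite{ConwayPiccirilloPowell}. This should yield a 4-manifold $N$ equipped with a homeomorphism $g \colon \partial N \cong \partial M$ and an isometry $F \colon \lambda_M \cong \lambda_N$ for which $g_* \circ D_N \circ \partial F = h$, together with the properties $\pi_1(N) = \Z$ and ribbon boundary. Since $\lambda_N \cong \lambda_M$ is nondegenerate, the Alexander module of $\partial N$ is $\Z[t^{\pm 1}]$-torsion, so $\Bl_{\partial N}$, $\mu_{\Bl_{\partial N}}$, and $D_N$ are all defined and enter $b(N)$ as in Construction~\ref{cons:CPP22}.

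Next I would promote $N$ into $\mathcal{S}_{h^+,\lambda}^{\st}(M)$ by verifying $N \cong_{\st} M$. Because $\pi_1 = \Z$ has no $2$-torsion and $\lambda_M \cong \lambda_N$, the two manifolds are either both spin or both non-spin, and since $M$ is spin so is $N$. The crucial input is that $g$ preserves the quadratic linking forms on the boundary: $h$ preserves $\mu_{\Bl_{\partial M}}$ by hypothesis, both $D_N$ and $\partial F$ preserve the quadratic refinements by their construction, and so the factor $g_* = h \circ (D_N \circ \partial F)^{-1}$ preserves them as well. Proposition~\ref{prop:CPPStablyHomeo} then yields $N \cong_{\st} M$. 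Since $b(N) = [h]$ by construction, surjectivity follows.

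The hard part, and the reason the quadratic refinement is not optional, is arranging that the realised $N$ is stably homeomorphic to $M$ in the spin category. Remark~\ref{rem:HomotopyEquivalences} warns that different spin coboundaries of a single 3-manifold can equip its Blanchfield form with different quadratic refinements, so the quadratic refinement encodes precisely the spin obstruction. Preservation of $\mu_{\Bl_{\partial M}}$ by $g$ is what Proposition~\ref{prop:CPPSpinUnion} converts into spin-ness of $M \cup_g -N$, which is the bordism-theoretic input Kreck's criterion (Lemma~\ref{lem:Kreck}) requires in order to conclude stable homeomorphism. Without this refinement one would realise $h$ by some $N$ but would lose control of the spin structure and hence of the stable class.
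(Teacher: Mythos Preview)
Your proposal is correct and follows essentially the same route as the paper: invoke the realisation theorem of \cite{ConwayPiccirilloPowell} to produce $N$ with $g_* \circ D_N \circ \partial F = h$, observe that $g_*$ inherits preservation of the quadratic refinement from $h$, $D_N$, and $\partial F$, and then apply Proposition~\ref{prop:CPPStablyHomeo} to place $N$ in $\mathcal{S}_{h^+,\lambda}^{\st}(M)$. The only datum from the realisation theorem you do not mention explicitly is $\ks(N)=\ks(M)$, but in the spin case this is not needed for Proposition~\ref{prop:CPPStablyHomeo}, so your argument is complete as written.
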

 \begin{proof}
\purple{According to}~\cite[Theorem 1.15]{ConwayPiccirilloPowell}, after fixing one choice of isometry $\partial \lambda_M \cong - \Bl_{\partial M}$ \purple{-- we will use $D_M$ --}
\purple{every element $\Aut(\Bl_{\partial M})/\Aut(\lambda_M)$}
 is realised by a~$4$-manifold $N$ with~$\pi_1(N) = \Z$, ribbon boundary $\partial N$ homeomorphic to~$\partial M$ \purple{via a homeomorphism $g \colon \partial N \cong \partial M$}, equivariant intersection form $\lambda_N$ isometric to~$\lambda_M$ \purple{via an isometry $F \colon \lambda_M \cong \lambda_N$}, and~$\ks(N)=\ks(M)$.
\color{black}
In particular we can realise every element $b \in \Aut(\Bl_{\partial M},\mu_{\Bl_{\partial M}})/(\Aut(\lambda_M)\times \hAut_{\varphi}^{+,\quadr}(\partial M))$ by such a manifold~$N$.

As indicated in~\cite[Proof of Proposition 4.14]{ConwayPiccirilloPowell},  we have $b=g_* \circ D_N \circ \partial F$.
Since~$b$, $D_N$, and $\partial F$ are isometries that preserve quadratic refinements,  so does $g$.

Proposition~\ref{prop:CPPStablyHomeo} now ensures that~$M$ and $N$ are stably homeomorphic.
We have therefore produced $N \in \mathcal{S}_{h^+ ,\lambda}^{\st}(M)$ with $b(N)=b$, as required.
 \end{proof}

\section{Infinite automorphism sets}
\label{sec:InfinitebAut}

In this section,  we conclude the proof of Theorem~\ref{thm:Infinite} by showing that \[\Aut(\Bl_{\partial M},\purple{\mu_{\Bl_{\partial M}}})/(\Aut(\lambda_M)\times \hAut_{\varphi}^{+,\purple{\quadr}}(\partial M))\] is countably infinite when \[M=X_{2q}(U) \natural (S^1 \times D^3),\]  with $q$ an odd prime.
Here~$X_{2q}(U)$ denotes the~$2q$-trace on the unknot~$U$, i.e. the smooth~$4$-manifold obtained from~$D^4$ by attaching a~$2q$-framed~$2$-handle along the unknot.
The plan is to first study~$\Aut(\Bl_{\partial M},\purple{\mu_{\Bl_{\partial M}}})/\Aut(\lambda_M)$ and then to consider the action by the self-homotopy equivalences of~$\partial M$.
\purple{In fact we will show in Lemma~\ref{lem:Quad=Sym} that $\Aut(\Bl_{\partial M}, \purple{\mu_{\Bl_{\partial M}}})/\Aut(\lambda_M)=\Aut(\Bl_{\partial M})/\Aut(\lambda_M)$. So first we will consider the latter set, ignoring quadratic refinements for a moment.}

\medbreak

To study~$\Aut(\Bl_{\partial M})/\Aut(\lambda_M)$,  recall from Section~\ref{sec:Reduc} that~$\Bl_{\partial M}$ is isometric to the linking form~$\partial \lambda_M$ defined in~\eqref{eq:BoundaryLinkingForm}, at the start of Section~\ref{sec:Reduc}.
In particular,  the isometry~$D_M \colon -\partial \lambda_M \cong \Bl_{\partial M}$ induces a bijection
$$ \Aut(\Bl_{\partial M})/\Aut(\lambda_M) \cong  \Aut(\partial \lambda_M)/\Aut(\lambda_M).$$
For rank one forms (such as~$(H_2(M;\Z[t^{\pm 1}]),\lambda_M)=(\Z[t^{\pm 1}],\lambda_{2q})$, this set admits a particularly convenient description.

Given a ring~$R$ with involution~$x \mapsto \overline{x}$,  the group of \emph{unitary units}~$U(R)$ refers to those~$u \in R$ such that~$u \overline{u}=1$, with the group operation given by restricting the multiplication on $R$.
For example, when~$R=\Z[t^{\pm 1}]$, all units are unitary and are of the form~$\pm t^{k}$ with~$k \in \Z$.
In what follows, we make no distinction between rank one Hermitian forms and symmetric Laurent polynomials.
The next lemma follows by unwinding the definition of~$\Aut(\partial \lambda)$; see also~\cite[Remark 1.16]{ConwayPowell} and~\cite[Lemma~7.1]{ConwayPiccirilloPowell}.

\begin{lemma}
\label{lem:Rank1}
If~$\lambda \in \Z[t^{\pm 1}]$ is a symmetric Laurent polynomial, then
$$\Aut(\partial \lambda)/\Aut(\lambda)=U(\Z[t^{\pm 1}]/\lambda)/U(\Z[t^{\pm 1}]).$$
\end{lemma}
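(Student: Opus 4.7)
The approach is to unwind the definitions and identify both sides explicitly. Since $(H_2(M;\Z[t^{\pm 1}]),\lambda_M)$ is of rank one, we may take $H = \Z[t^{\pm 1}]$ with pairing $\lambda(x,y) = x \lambda \overline{y}$. Under the canonical identification $H^* \cong \Z[t^{\pm 1}]$, the adjoint $\widehat{\lambda}$ is multiplication by $\lambda$, so $\coker(\widehat{\lambda}) = \Z[t^{\pm 1}]/\lambda$, and the boundary linking form becomes $\partial \lambda([x],[y]) = \frac{1}{\lambda}x\overline{y} \in \Q(t)/\Z[t^{\pm 1}]$.

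First I would compute $\Aut(\partial \lambda)$. Any $\Z[t^{\pm 1}]$-linear endomorphism of $\Z[t^{\pm 1}]/\lambda$ is multiplication by some class $u$, and it is an automorphism precisely when $u$ is a unit of $\Z[t^{\pm 1}]/\lambda$. The isometry condition $\partial \lambda([u],[u]) = \partial \lambda([1],[1])$ translates to $u\overline{u} \equiv 1 \pmod{\lambda}$, so that $\Aut(\partial \lambda) = U(\Z[t^{\pm 1}]/\lambda)$. Similarly, an isometry of $\lambda$ is multiplication by some $a \in \Z[t^{\pm 1}]$ satisfying $a \lambda \overline{a} = \lambda$, which (since $\lambda \neq 0$) forces $a\overline{a} = 1$, giving $\Aut(\lambda) = U(\Z[t^{\pm 1}])$.

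Next I would identify the action of $\Aut(\lambda)$ on $\Aut(\partial \lambda)$ described in Remark~\ref{rem:ActionIsometry}. For $F$ equal to multiplication by $a \in U(\Z[t^{\pm 1}])$, the induced dual $F^*$ on $H^* = \Z[t^{\pm 1}]$ is again multiplication by $a$ under our identifications, and hence $\partial F \colon \Z[t^{\pm 1}]/\lambda \to \Z[t^{\pm 1}]/\lambda$ is just multiplication by the image of $a^{\pm 1}$ modulo $\lambda$. Thus the action of $F_a$ on $u \in U(\Z[t^{\pm 1}]/\lambda)$ is by left multiplication by (a unit of) $\Z[t^{\pm 1}]$ reduced modulo $\lambda$, and the orbit set $\Aut(\partial \lambda)/\Aut(\lambda)$ is precisely $U(\Z[t^{\pm 1}]/\lambda)/U(\Z[t^{\pm 1}])$, where $U(\Z[t^{\pm 1}]) = \{\pm t^k\}$ acts through its image under the reduction map $U(\Z[t^{\pm 1}]) \to U(\Z[t^{\pm 1}]/\lambda)$.

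The main (minor) obstacle is bookkeeping the conjugated module structure on $H^*$ and checking that pre- versus post-composition produce the correct one-sided action once one reduces to the rank one case; once the identifications are fixed, every verification reduces to a direct calculation with elements of $\Z[t^{\pm 1}]$. No substantial new idea is needed beyond reinterpreting the definitions of $\partial \lambda$, $\Aut(\lambda)$, and the action from Remark~\ref{rem:ActionIsometry} in the rank one setting.
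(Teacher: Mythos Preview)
Your proposal is correct and matches the paper's approach exactly: the paper does not give a detailed proof but simply states that the lemma ``follows by unwinding the definition of~$\Aut(\partial \lambda)$'' and cites \cite[Remark~1.16]{ConwayPowell} and \cite[Lemma~7.1]{ConwayPiccirilloPowell}. Your write-up carries out precisely this unwinding, and your closing remark about bookkeeping the conjugate module structure on $H^*$ correctly identifies the only place where care is needed.
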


\begin{proposition}
\label{ex:UnitaryUnitsZZ}
Given an odd prime $q$, the map
\begin{align*}
\Theta \colon \Z &\xrightarrow{\cong} U(\Z[t^{\pm 1}]/2q)/U(\Z[t^{\pm 1}]) \\
n &\mapsto (q{-}1)t^n+q
\end{align*}
is a group isomorphism.
\end{proposition}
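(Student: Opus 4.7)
The plan is to verify that $\Theta$ is a well-defined group homomorphism, then to compute $U(\Z[t^{\pm 1}]/2q)/U(\Z[t^{\pm 1}])$ independently via the Chinese Remainder Theorem, and finally to show that $\Theta$ realises the resulting identification with $\Z$.

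First I would verify that $\Theta(n) = (q-1)t^n + q$ is a unitary unit modulo $2q$. Expanding $\Theta(n)\overline{\Theta(n)}$ gives $(q-1)^2 + q(q-1)(t^n + t^{-n}) + q^2$. Since $q$ is odd, $q-1$ is even and $q(q-1) \equiv 0 \pmod{2q}$, killing the cross term, while $(q-1)^2 + q^2 = 2q^2 - 2q + 1 \equiv 1 \pmod{2q}$. Next I would check the homomorphism property by a similar expansion, using $(q-1)^2 \equiv q+1 \pmod{2q}$ and $q^2 \equiv q \pmod{2q}$ to obtain $\Theta(m)\Theta(n) \equiv -\Theta(m+n) \pmod{2q}$; since $-1 \in U(\Z[t^{\pm 1}])$, this yields $\Theta(m)\Theta(n) = \Theta(m+n)$ in the quotient.

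For bijectivity, I would invoke the CRT isomorphism $\Z[t^{\pm 1}]/2q \cong \F_2[t^{\pm 1}] \times \F_q[t^{\pm 1}]$, which is compatible with the involution $t \mapsto t^{-1}$. The unitary units in the two factors are $\{t^a : a \in \Z\}$ and $\{\eta t^b : \eta = \pm 1,\ b \in \Z\}$ respectively, so $U(\Z[t^{\pm 1}]/2q) \cong \Z \oplus \Z/2 \oplus \Z$ via coordinates $(a, \eta, b)$. The subgroup $U(\Z[t^{\pm 1}]) = \{\pm t^k\}$ acts by $(a, \eta, b) \mapsto (a+k, \pm\eta, b+k)$, so the orbits are parameterised by the integer $b - a$. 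Reducing $\Theta(n)$ modulo $2$ gives $1$ (hence $a = 0$) and modulo $q$ gives $-t^n$ (hence $\eta = -1$ and $b = n$), confirming that $\Theta$ corresponds to the identity $\Z \to \Z$ under this identification, and is therefore an isomorphism. The main potential pitfall is tracking the action of $U(\Z[t^{\pm 1}])$ on the CRT decomposition correctly; once that bookkeeping is in place, the result drops out.
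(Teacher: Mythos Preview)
Your proof is correct and follows essentially the same strategy as the paper: verify unitarity and the homomorphism property by direct expansion, then use the Chinese Remainder decomposition $\Z[t^{\pm 1}]/2q \cong \F_2[t^{\pm 1}] \times \F_q[t^{\pm 1}]$ to identify the unitary units in each factor and pass to the quotient by $U(\Z[t^{\pm 1}])$. The only organisational difference is that the paper checks injectivity by a separate elementary argument (if $(q{-}1)t^n+q \equiv \pm t^k$ then $n=0$) before invoking CRT for surjectivity, whereas you handle bijectivity in one stroke by computing the quotient $(\Z \oplus \Z/2 \oplus \Z)/\{(k,\pm 1,k)\} \cong \Z$ and reading off $\Theta(n) \mapsto n$; this is slightly cleaner but not materially different.
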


\begin{proof}
One verifies that $(q{-}1)t^n+q$ is a unitary unit by using that~$q(q{-}1) \equiv 0$ mod $2q$ (recall that~$q$ is odd).
We then check that~$\Theta$ is a homomorphism:
\begin{align*}
n+m \mapsto ((q{-}1)t^n+q)((q{-}1)t^m+q)
&= (q{-}1)^2t^{n+m} + q(q{-}1) (t^m +t^n) + q^2  \\
& \sim -(q{-}1)t^{n+m} - q \\
& \sim (q{-}1)t^{n+m} +q.
\end{align*}
Here the penultimate equivalence uses that $(q{-}1)^2=q(q{-}1)-(q{-}1) \equiv -(q{-}1)$ mod $2q$ and $q^2 \equiv q \equiv -q$ mod $2q$.
 The last equivalence uses that~$-1 \in U(\Z[t^{\pm 1}])$.

Next we show that~$\Theta$ is injective.
If~$(q{-}1)t^n + q$ were trivial, we would have~$(q{-}1)t^n + q = \pm t^k \in \Z[t^{\pm 1}]/2q$ for some~$k$, but this is true only if~$n=0$.

Now we show that~$\Theta$ is surjective.
An explicit verification shows that the following map is an isomorphism:
\begin{align*}
U(\Z[t^{\pm 1}]/2) \times U(\Z[t^{\pm 1}]/q) & \to U(\Z[t^{\pm 1}]/2q) \\
(a,b) &\mapsto qa-(q{-}1)b.
\end{align*}
To see this one should check that~$(qa-(q{-}1)b)(q\overline{a}-(q{-}1)\overline{b}) \equiv 1$ when~$a\bar{a}=1= b\bar{b}$, which implies that the map lands in the claimed target.
The inverse is given by ~$x \mapsto ([x]_2,[x]_{q})$, i.e.\ considering the coefficients modulo $2$ and $q$ respectively.
Checking that this is the inverse homomorphism implies that the map is an isomorphism as asserted.

The units of~$\Z[t^{\pm 1}]/2$ are of the form~$t^m$ for~$m \in \Z$.
On the other hand, since $q$ is an odd prime, the unitary units of $\Z[t^{\pm 1}]/q$ are of the form~$\pm t^n$ for~$n \in \Z$.
It follows that
$$U(\Z[t^{\pm 1}]/2q) \cong \{qt^m+ (q{-}1) \varepsilon t^n \mid n,m\in\Z,
\varepsilon\in\{\pm 1\}\}.$$
Passing to the quotient by~$U(\Z[t^{\pm 1}])$ yields the required isomorphism, because once we can multiply by~$\pm t^k$ for any~$k \in \Z$, we have
$qt^m + (q{-}1)\varepsilon t^n  \sim (q{-}1) \varepsilon t^{n-m}+q$.
Also \[-(q{-}1)t^{n-m}+q \sim -(q{-}1)t^{n-m}-q \sim (q{-}1)t^{n-m}+q,\]
so we can ignore the~$\varepsilon$, and every element of~$U(\Z[t^{\pm 1}]/2q)$ is of the form~$(q{-}1)t^k +q$ for some~$k \in \Z$.  So~$\Theta$ is indeed surjective, which completes the proof that~$\Theta$ is an isomorphism.
\end{proof}

\color{black}
\begin{lemma}
\label{lem:Quad=Sym}
Given an odd prime $q \in \Z$, for the Hermitian form~$\lambda=2q \in \Z[t^{\pm 1}]$, one has
$$\Aut(\partial \lambda, \purple{\mu_\partial})/\Aut(\lambda)=\Aut(\partial \lambda)/\Aut(\lambda).$$
\end{lemma}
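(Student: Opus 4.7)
The inclusion $\Aut(\partial\lambda,\mu_\partial)/\Aut(\lambda)\subseteq\Aut(\partial\lambda)/\Aut(\lambda)$ is immediate, so the goal is to show the reverse: every isometry of $\partial\lambda$, viewed modulo $\Aut(\lambda)$, automatically preserves $\mu_\partial$. The plan is to parametrise these isometries concretely via Proposition~\ref{ex:UnitaryUnitsZZ} and then verify preservation of $\mu_\partial$ by a single computation in $Q_1(\Z[t^{\pm 1}],S)$.

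First, I would unpack the relevant structures in the rank-one case $\lambda=2q$. Here $\coker(\widehat{\lambda})\cong \Z[t^{\pm 1}]/2q$, the boundary symmetric linking form is $\partial\lambda([x],[y])=x\overline{y}/(2q)$, and the formula in Definition~\ref{def:BoundaryLinkingForm} gives the quadratic refinement $\mu_\partial([x])=x\overline{x}/(2q)\in Q_1(\Z[t^{\pm 1}],S)$. By Lemma~\ref{lem:Rank1} together with Proposition~\ref{ex:UnitaryUnitsZZ}, every class in $\Aut(\partial\lambda)/\Aut(\lambda)$ is represented by multiplication by $u_n:=(q-1)t^n+q\in\Z[t^{\pm 1}]/2q$ for some $n\in\Z$. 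So it suffices to show that for each $n$, multiplication by $u_n$ preserves $\mu_\partial$.

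By axiom \eqref{item:quad-refinement-i} of Definition~\ref{defn:quadratic-refinement}, $\mu_\partial(u_nx)=u_n\overline{u_n}\,\mu_\partial(x)$, so we need to show that $(u_n\overline{u_n}-1)\mu_\partial(x)$ vanishes in $Q_1(\Z[t^{\pm 1}],S)$ for every $x$. A direct expansion yields
\[u_n\overline{u_n}-1=q(q-1)(2+t^n+t^{-n})=2q\cdot\tfrac{q-1}{2}(2+t^n+t^{-n}),\]
where crucially we use that $q$ is odd, so $(q-1)/2\in\Z$. Writing $x=[y]$ with $y\in\Z[t^{\pm 1}]$, this gives
\[(u_n\overline{u_n}-1)\mu_\partial([y])=\tfrac{q-1}{2}(2+t^n+t^{-n})\,y\overline{y}\in\Z[t^{\pm 1}].\]

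The final and decisive step is to realise this element as $a+\overline{a}$ for some $a\in\Z[t^{\pm 1}]$. Since $y\overline{y}$ is fixed by the involution and $2+t^n+t^{-n}=(1+t^n)+\overline{(1+t^n)}$, setting
\[a:=\tfrac{q-1}{2}(1+t^n)\,y\overline{y}\]
gives $a+\overline{a}=\tfrac{q-1}{2}(2+t^n+t^{-n})\,y\overline{y}$, as required. Thus $(u_n\overline{u_n}-1)\mu_\partial([y])=0$ in $Q_1(\Z[t^{\pm 1}],S)$, completing the proof. The main, and essentially only, obstacle is the last parity check, which is why the hypothesis that $q$ is an odd prime (ensuring $q-1$ is even) is used.
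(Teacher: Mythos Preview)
Your proof is correct and follows essentially the same approach as the paper's. Both reduce via Proposition~\ref{ex:UnitaryUnitsZZ} to checking that multiplication by $u_n=(q-1)t^n+q$ preserves $\mu_\partial$, and both hinge on the same arithmetic: $u_n\overline{u_n}-1=q(q-1)(2+t^n+t^{-n})$ together with the parity of $q-1$. The only cosmetic difference is that the paper verifies $\mu_\partial(u_n\cdot 1)=\mu_\partial(1)$ at the generator (which suffices by cyclicity and axiom~\eqref{item:quad-refinement-i}), whereas you carry a general $x=[y]$ through the computation; the extra $y\overline{y}$ factor is symmetric and causes no difficulty.
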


\begin{proof}
The inclusion $\Aut(\partial \lambda, \purple{\mu_\partial})/\Aut(\lambda) \subseteq \Aut(\partial \lambda)/\Aut(\lambda)$ always holds. So, thanks to Proposition~\ref{ex:UnitaryUnitsZZ},  the lemma reduces to proving that for every $n \in \Z$ multiplication by $(q-1)t^n+q$ as a map $\Z[t^{\pm 1}]/2q \to \Z[t^{\pm 1}]/2q$ preserves the quadratic refinement $\mu_\partial(x)=\smfrac{1}{2q} x$.
Writing $q=2k+1$, a direct calculation in $Q_1(\Z[t^{\pm 1}],S)$ now shows that
\begin{align*}
\smfrac{1}{2q}((q-1)t^n+q)((q-1)t^{-n}+q)
&=\smfrac{1}{2q}((q-1)^2+q^2)+\smfrac{1}{2q}(q(q-1)(t^n+t^{-n})) \\
&=\smfrac{1}{2q} + 2k+ k(t^n+t^{-n}) \equiv \smfrac{1}{2q}.
\end{align*}
This concludes the proof of the lemma.
\end{proof}
\color{black}

For $q$ an odd prime, the combination of Lemma~\ref{lem:Rank1}, Proposition~\ref{ex:UnitaryUnitsZZ}, \purple{and Lemma~\ref{lem:Quad=Sym}} implies that for~$M=X_{2q}(U) \natural (S^1 \times D^3)$ we have
$$\purple{\Aut(\Bl_{\partial M},\purple{\mu_{\Bl_{\partial M}}})/\Aut(\lambda_M)
= \Aut(\Bl_{\partial M})/\Aut(\lambda_M)
\cong \Z.}$$
We now study the effect of factoring out by~$\hAut^{+,\purple{\quadr}}_\varphi(\partial M)$.

\color{black}
\begin{remark}
\label{rem:DontHaveToWorkTooHard}
Our strategy will be to determine $\hAut^+_\varphi(\partial M)$ and show that the effect of its action on $ \Aut(\Bl_{\partial M})/\Aut(\lambda_M)$ is given by multiplication by elements of the form $\pm t^n$, $n \in \Z$.
This will automatically imply that $\hAut^{+,\quadr}_\varphi(\partial M)$ also acts on $\Aut(\Bl_{\partial M},\mu_{\Bl_{\partial M}})/\Aut(\lambda_M)$ by multiplication by elements of the form $\pm t^n$.
\end{remark}
\color{black}

In fact we will make this argument in a slightly more general setting.
Consider the~$3$-manifold~$Y:=N \# (S^1 \times S^2)$, where~$N$ is a 3-manifold with finite fundamental group.
We fix an identification~$H_1(S^1 \times S^2)= \Z$, an identification $\pi_1(Y) = \pi_1(N) \ast \Z$, and consider the finite abelian group~$A:=TH_1(Y) \cong H_1(N)$.
Let~$\varphi \colon \pi_1(Y) \to H_1(Y)/TH_1(Y)=\Z$ be the canonical projection onto the free part of~$H_1(Y)$.
In what follows, to distinguish~$H_1(S^1 \times S^2)=\Z$ from the free~$\Z$-factor of~$\pi_1(Y) \cong \pi_1(N) \ast \Z$,  we will exclusively write~$H_1(S^1 \times S^2)$ as~$\langle t \rangle$.

Summarising the notation, we have
\begin{align*}
A := TH_1(Y) \cong H_1(N),\,\,
\varphi  \colon \pi_1(Y) \twoheadrightarrow \langle t \rangle, \text{ and }
\theta \colon \pi_1(N) \stackrel{\operatorname{ab}}{\twoheadrightarrow} H_1(\pi_1(N))= H_1(N) = A.
\end{align*}
The example we have in mind is~$Y_q=\partial M=L(2q,1) \# (S^1 \times S^2)$, where $M = X_{2q}(U) \natural S^1 \times D^3$, so that~$A \cong \Z/2q$ and~$\varphi \colon \pi_1(Y_q) \twoheadrightarrow \langle t \rangle$ coincides with the inclusion induced map~$\pi_1(\partial M) \twoheadrightarrow \pi_1(M)=\Z$.

Returning to the more general setting where~$Y=N \# (S^1 \times S^2)$ with~$N$ a 3-manifold with $\pi_1(N)$ finite, the epimorphism~$\varphi \colon \pi_1(Y) \twoheadrightarrow  \langle t \rangle$ induces an infinite cyclic cover~$Y^\infty$ with
$$H_1(Y^\infty) \cong H_1(Y;\Z[t^{\pm 1}]) \cong H_1(\ker(\varphi)).$$
Our goal is now to describe the isomorphism type of this~$\Z[t^{\pm 1}]$-module (this is the content of Construction~\ref{cons:Isomorphism} and Lemma~\ref{lem:IsomorphismTH} below) and to then deduce the effect of the action of~$\hAut_{\varphi}^+(Y)$ on~$H_1(Y;\Z[t^{\pm 1}])$ in Proposition~\ref{prop:im(H_*)small}.

In what follows, we write~$A[t^{\pm 1}]$ for the abelian group of Laurent polynomials with coefficients in the finite abelian group~$A$.

\begin{construction}
\label{cons:Isomorphism}
We construct a group homomorphism~$\Psi \colon A[t^{\pm 1}] \to H_1(\ker(\varphi))$.

Elements of~$A[t^{\pm 1}]$ are of the form~$\sum_i  a_it^i$ with ~$a_i \in A$.
As the map~$\varphi \colon \pi_1(N) \ast \Z \to \Z$ is surjective, we can write each~$t^i$ as~$\varphi(g_i) =t^i$ for some~$g_i \in \pi_1(N) \ast \Z$.
The abelianisation~$\theta \colon \pi_1(N) \to A=H_1(\pi_1(N))$ is also surjective, so we can write each~$a \in A$ as~$a=\theta(p)$ for some~$p\in \pi_1(N)$.
 We can therefore write an element of~$A[t^{\pm 1}]$ as~$\sum_i \theta(p_i) \varphi(g_i)$.
Since~$p_i \in \pi_1(N),g_i \in \pi_1(N) \ast \Z$ and~$A\subseteq \ker(\varphi)$, we can consider the element ~$g_ip_ig_i^{-1}$ as an element of~$\ker(\varphi) \subseteq \pi_1(N) \ast \Z$ and use~$[g_ip_ig_i^{-1}] \in H_1(\ker(\varphi))$ to denote its image in the abelianisation.
 Define the map~$\Psi$ as
  \begin{align*}
   \Psi \colon &A[t^{\pm 1}]  \to  H_1(\ker(\varphi)) \\ &\sum_i a_it^i=\sum_i \theta(p_i) \varphi(g_i) \mapsto  \sum_i [g_i p_i g_i^{-1}].
   \end{align*}
We show that~$\Psi$ does not depend on the choice of the~$p_i$ and the~$g_i$.
First we argue that the definition of~$\Psi$ does not depend on the choice of the~$p_i$.
It suffices to show that if~$\theta(p)=\theta(p')$, then~$\Psi(\theta(p)\varphi(g))=\Psi(\theta(p')\varphi(g))$ for every~$g \in \pi_1(N) \ast \Z$.
Since~$\theta(p(p')^{-1})=0$, we know that~$p{p'}^{-1}$ lies in the commutator subgroup~$\pi_1(N)^{(1)} = [\pi_1(N),\pi_1(N)].$  Therefore,  since $\pi_1(N)^{(1)}$ is normal, $gp{p'}^{-1}g^{-1} = (gpg^{-1})(g{p'}^{-1} g^{-1}) \in \pi_1(N)^{(1)}$ for all~$g \in \pi_1(N) \ast \Z$. Since~$\pi_1(N) \subseteq \ker (\varphi)$, it follows that~$\pi_1(N)^{(1)} \subseteq (\ker (\varphi))^{(1)}$, and therefore~$(gpg^{-1})(g{p'}^{-1} g^{-1}) \in (\ker (\varphi))^{(1)}$, from which it follows~$(gpg^{-1})(g{p'}^{-1} g^{-1})  =0 \in H_1(\ker ( \varphi))$.
We deduce that~$[g p g^{-1}]=[g p'g^{-1}] \in H_1(\ker(\varphi))$ and thus
$$\Psi(\theta(p)\varphi(g))=[g p g^{-1}]=[g p'g^{-1}]=\Psi(\theta(p')\varphi(g)) \in H_1(\ker(\varphi)).$$
This proves that~$\Psi$ does not depend on the choice of the~$p_i$.

Next, we argue that the definition of~$\Psi$ does not depend on the choice of the~$g_i$.
This time,  it suffices to prove that if~$\varphi(g) = \varphi({g'})$ and~$p \in \pi_1(N)$, then~$\Psi(\theta(p)\varphi(g))=~\Psi(\theta(p)\varphi({g'}))$.
This latter equality holds if and only if
$[gpg^{-1} {g'}p^{-1}{g'}^{-1}] = 0 \in H_1(\ker(\varphi))$, which in turn, by conjugating with $g^{-1}$, holds if and only if~$[pg^{-1} {g'}p^{-1}{g'}^{-1}g]=0 \in H_1(\ker(\varphi))$.
But since~$pg^{-1} {g'}p^{-1}{g'}^{-1}g$ is a commutator of~$p$ and~$g^{-1}g'$, which both lie in~$\ker(\varphi)$,  we indeed obtain~$[pg^{-1} {g'}p^{-1}{g'}^{-1}g]=0 \in H_1(\ker(\varphi))$.

This concludes the verification that~$\Psi$ does not depend on any of the choices we made.
One also verifies readily that~$\Psi$ is a group homomorphism. This completes Construction~\ref{cons:Isomorphism}.
\end{construction}

As in Construction~\ref{cons:Isomorphism}, for each~$h \in \langle t \rangle$, we fix a~$g \in \pi_1(N) \ast \Z$ such that~$\varphi(g)=h$.
This choice will be used again in the next lemma which establishes that the map~$\Psi$ is an isomorphism.

\begin{lemma}
\label{lem:IsomorphismTH}
The map~$\Psi \colon A[t^{\pm 1}]  \to  H_1(\ker(\varphi))$  from Construction~\ref{cons:Isomorphism} is an isomorphism.
\end{lemma}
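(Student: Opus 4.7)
\medskip

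The plan is to identify $\ker(\varphi)$ abstractly and then check that $\Psi$ is the resulting natural isomorphism. Since $\pi_1(Y) = \pi_1(N) \ast \langle t \rangle$ and $\varphi$ kills $\pi_1(N)$ while restricting to the identity on $\langle t \rangle$, I would first argue that
\[
\ker(\varphi) \;=\; \mathop{\Large \ast}\limits_{n \in \Z} \, t^n \pi_1(N) t^{-n},
\]
a free product of countably many conjugate copies of $\pi_1(N)$ indexed by~$\Z$. This can be seen either by Kurosh (applied to the free product $\pi_1(N) \ast \langle t \rangle$ and the subgroup $\ker(\varphi)$) or, perhaps more transparently, geometrically: the infinite cyclic cover $Y^\infty \to Y$ is obtained from the decomposition $Y = (N \sm \mathring{D}^3) \cup_{S^2} ((S^1 \times S^2) \sm \mathring{D}^3)$ by lifting. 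The second piece lifts to $\R \times S^2$ minus a $\Z$-indexed family of open balls — a simply connected $3$-manifold, since $\R \times S^2$ is simply connected and open balls have codimension $3$ — and at the boundary of each removed ball one attaches a copy $N_n := N \sm \mathring{D}^3$ for $n \in \Z$, with deck transformations shifting the index. Van Kampen then gives $\pi_1(Y^\infty) = \ast_{n \in \Z} \pi_1(N_n) \cong \ast_{n \in \Z} \pi_1(N)$, matching the algebraic description.

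Next I would abelianise: the abelianisation of a free product is the direct sum of the abelianisations, so
\[
H_1(\ker(\varphi)) \;\cong\; \bigoplus_{n \in \Z} H_1(t^n \pi_1(N) t^{-n}) \;\cong\; \bigoplus_{n \in \Z} A \;=\; A[t^{\pm 1}],
\]
with the $\Z[t^{\pm 1}]$-module structure given by the shift action coming from conjugation by $t$ (equivalently, the deck transformation of $Y^\infty$). This already produces a canonical isomorphism $A[t^{\pm 1}] \xrightarrow{\cong} H_1(\ker(\varphi))$ sending $a t^n$ with $a = \theta(p)$ to the class $[t^n p t^{-n}]$.

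Finally I would check that this canonical isomorphism coincides with $\Psi$. Given $\sum_i a_i t^i \in A[t^{\pm 1}]$, the construction of $\Psi$ makes choices $p_i \in \pi_1(N)$ with $\theta(p_i) = a_i$ and lifts $g_i \in \pi_1(N) \ast \langle t \rangle$ with $\varphi(g_i) = t^i$; the preceding lemma already showed these choices do not affect the value. Specialising to $g_i := t^i$, one gets $\Psi(\sum_i a_i t^i) = \sum_i [t^i p_i t^{-i}]$, which is exactly the canonical isomorphism above. Hence $\Psi$ is an isomorphism.

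The only real obstacle is the identification of $\ker(\varphi)$ with $\ast_{n \in \Z} t^n \pi_1(N) t^{-n}$. Invoking Kurosh does it in one line but feels heavy; the van Kampen argument on $Y^\infty$ is more hands-on but requires one genuine geometric input, namely that $(\R \times S^2)$ minus a discrete family of open balls is simply connected. Since $Y^\infty$ then manifestly deformation retracts onto a wedge-like configuration of the pieces $N_n$ joined by simply connected plumbing, the $H_1$ calculation and the identification of $\Psi$ are essentially formal from this point.
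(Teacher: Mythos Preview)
Your proof is correct and takes a genuinely different route from the paper's. The paper constructs an explicit inverse $\Theta \colon H_1(\ker(\varphi)) \to A[t^{\pm 1}]$ by hand: given a word $w \in \ker(\varphi)$, it inserts cancelling pairs to rewrite $w$ as a product $\prod_k \widetilde{g}_k p'_k \widetilde{g}_k^{-1}$, replaces each $\widetilde{g}_k$ by a preferred lift $g_j$ with the same $\varphi$-image (valid modulo commutators in $\ker(\varphi)$), collects terms, and reads off $\sum_j \theta(p_j)\varphi(g_j)$. Your approach instead identifies $\ker(\varphi)$ structurally as $\ast_{n\in\Z}\, t^n \pi_1(N) t^{-n}$ (via Kurosh or the covering-space picture of $Y^\infty$), abelianises to $\bigoplus_n A = A[t^{\pm 1}]$, and then observes that $\Psi$, with the particular choices $g_i = t^i$, is exactly the resulting canonical map. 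The paper's argument is self-contained and elementary but leaves the well-definedness and homomorphism property of $\Theta$ to the reader; yours is more conceptual, makes the $\Z[t^{\pm 1}]$-module structure transparent, and avoids word manipulations, at the cost of invoking either Kurosh or a van Kampen computation on the infinite cyclic cover.

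One small quibble: open $3$-balls in a $3$-manifold have codimension $0$, not $3$. The correct justification that $(\R \times S^2)$ minus a discrete family of open balls is simply connected is simply that removing an open ball does not change $\pi_1$ (by van Kampen, since reattaching $D^3$ along the simply connected $S^2$ adds nothing). This slip does not affect the validity of your argument.
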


\begin{proof}
We construct an inverse~$\Theta \colon H_1(\ker(\varphi)) \to A[t^{\pm 1}]$ to~$\Psi$.
A word~$w \in \ker(\varphi) \subseteq \pi_1(N) \ast \Z$ representing an element of~$H_1(\ker(\varphi))$ is a product of elements of~$\pi_1(N)$ and~$\Z$.

By introducing cancelling pairs of the type~$g_i^{-1}g_i$ in between each occurrence of a~$p'_k \in \pi_1(N)$ in~$w$, we can arrange that for some elements~$\widetilde{g}_k \in \pi_1(N) \ast \Z$ and~$p'_k \in \pi_1(N)$, the word~$w$ is of the form
$$w=\prod_k \widetilde{g}_k p'_k \widetilde{g}_k^{-1}.$$
Here it is crucial to use that~$w \in \ker(\varphi)$. For example if~$w=p_1'n_1p_2'n_2p_3'n_3$,  for $p_i \in \pi_1(N)$ and $n_j \in \Z$, then since~$w \in \ker(\varphi)$ we know that~$n_3=(n_1n_2)^{-1}=(n_2n_1)^{-1}$.
Therefore we can express~$w$ as~$w=p_1'n_1p_2'n_1^{-1}(n_1n_2)p_3'(n_1n_2)^{-1}$.

As was mentioned before the lemma, 
we fixed a preferred~$g_j \in \pi_1(N) \ast \Z$ with~$\varphi(g_j)=\varphi(\widetilde{g}_k)$.
Arguing as in Construction~\ref{cons:Isomorphism} (when we showed that the choice of the~$g_i$ is immaterial), up to commutators in~$[\ker(\varphi),\ker(\varphi)]$, we can replace ~$\widetilde{g}_k p_k' \widetilde{g}_k^{-1}$ with~$g_j p_k' g_j^{-1}$.
Next, working in~$H_1(\ker(\varphi)) = \ker(\varphi)_{ab}$ and collecting terms with the same conjugating element~$g_j$, we obtain an element of the form~$\sum_j [g_j p_j g_j^{-1}]$, where~$p_j = \prod_{\lbrace k \mid \varphi(\widetilde{g}_k) = \varphi(g_j) \rbrace } p_k'$.
We can therefore define a map
\begin{align*}
\Theta \colon H_1(\ker(\varphi)) & \to A[t^{\pm 1}] \\
 {[w]} &\mapsto  \sum_j \theta(p_j) \varphi(g_j).
 \end{align*}
One checks that the map~$\Theta$ is a homomorphism and is the inverse to~$\Psi$. Thus~$\Psi$ is an isomorphism.
\end{proof}

We are now able to describe the action of~$\hAut_{\varphi}^+(Y)$ on~$H_1(Y;\Z[t^{\pm 1}])$.

\begin{proposition} \label{prop:im(H_*)small}
Let $at^{\ell}\in H_1(Y;\Z[t^{\pm 1}]) \cong A[t^{\pm 1}]$.
The action of~$f \in \hAut_{\varphi}^+(Y)$ sends $at^{\ell} \mapsto a'  t^{k + \ell}$, for some~$k \in \Z$ and for some element~$a' \in A$ having the same order as~$a$.
\end{proposition}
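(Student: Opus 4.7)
The plan is to exploit that $f \in \hAut_\varphi^+(Y)$ lifts to a homotopy equivalence $\widetilde{f} \colon Y^\infty \to Y^\infty$ of the infinite cyclic covers (this uses precisely the condition $\varphi \circ f_* = \varphi$), so that $\widetilde{f}_* \colon H_1(Y;\Z[t^{\pm 1}]) \to H_1(Y;\Z[t^{\pm 1}])$ is $\Z[t^{\pm 1}]$-linear. Under the identification $H_1(Y;\Z[t^{\pm 1}]) \cong A[t^{\pm 1}]$ provided by Lemma~\ref{lem:IsomorphismTH}, this linearity gives $\widetilde{f}_*(at^\ell) = t^\ell \widetilde{f}_*(a)$, so the entire statement reduces to showing that for each $a \in A \subseteq A[t^{\pm 1}]$ there exists a single integer $k$ (depending on $f$, but not on $a$) and an element $a'\in A$ with $\ord(a')=\ord(a)$ such that $\widetilde{f}_*(a) = a' t^k$.

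To handle $\widetilde{f}_*|_A$ I would exploit the free product decomposition $\pi_1(Y) = \pi_1(N) \ast \Z$ in tandem with the Kurosh subgroup theorem. Since $\pi_1(N)$ is finite and $\Z$ is torsion-free, every finite subgroup of $\pi_1(N) \ast \Z$ is conjugate into $\pi_1(N)$; applied to the finite subgroup $f_*(\pi_1(N))$, which has the same order as $\pi_1(N)$, this forces
\[ f_*(\pi_1(N)) = g\,\pi_1(N)\,g^{-1} \quad \text{for some } g \in \pi_1(Y). \]
Consequently $p \mapsto g^{-1} f_*(p) g$ defines a group automorphism of $\pi_1(N)$, whose abelianisation yields an automorphism $\alpha \colon A \to A$. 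I would then verify that the integer $k := \varphi(g)$ is well-defined: the element $g$ is unique up to right multiplication by the normaliser of $\pi_1(N)$ in $\pi_1(Y)$, which by a second Kurosh argument is $\pi_1(N)$ itself, and $\varphi$ vanishes on $\pi_1(N)$.

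Finally I would combine these inputs with the explicit isomorphism $\Psi$ of Construction~\ref{cons:Isomorphism}. Pick $p \in \pi_1(N)$ with $\theta(p) = a$ and write $f_*(p) = g p' g^{-1}$ with $p' \in \pi_1(N)$ and $\theta(p')=\alpha(a)$; then
\[ \widetilde{f}_*(a) = [f_*(p)] = [g p' g^{-1}] = \Psi\bigl(\theta(p')\,\varphi(g)\bigr) = \alpha(a)\, t^{k} \in A[t^{\pm 1}]. \]
Setting $a':=\alpha(a)$ and using $\Z[t^{\pm 1}]$-linearity gives $\widetilde{f}_*(at^\ell) = a' t^{k+\ell}$, with $\ord(a')=\ord(a)$ because $\alpha$ is an automorphism of the finite abelian group $A$.

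The main technical obstacle is the Kurosh-style step: I would need to be careful that the $f_*$ appearing here is a legitimate group automorphism (so I should fix basepoints or, equivalently, allow the shift $k$ to absorb the ambiguity of an inner automorphism, which is consistent with the deck-transformation ambiguity in the choice of the lift $\widetilde{f}$). Once the conjugacy of $f_*(\pi_1(N))$ with $\pi_1(N)$ is in hand, the rest is a direct unwinding of Construction~\ref{cons:Isomorphism}; in particular the well-definedness of $\alpha$ under changing $p$ by a commutator reduces to the observation that conjugation by $g$ preserves the commutator subgroup of $\pi_1(N)$, which after abelianisation disappears.
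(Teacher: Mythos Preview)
Your proposal is correct and uses the same essential ingredients as the paper's proof: the Kurosh subgroup theorem together with the explicit isomorphism~$\Psi$ from Construction~\ref{cons:Isomorphism}. The difference is one of packaging. The paper applies Kurosh to the cyclic subgroup generated by each individual~$f_*(p)$, obtaining~$f_*(p)=hp'h^{-1}$ with a conjugator~$h$ that a priori depends on~$p$; it then needs a separate paragraph to show that~$[p']$ and~$[p]$ have the same order in~$H_1(\ker\varphi)$. You instead apply Kurosh once to the entire finite subgroup~$f_*(\pi_1(N))$, obtaining a single conjugator~$g$ and hence a genuine automorphism~$\alpha$ of~$A$; order preservation is then immediate, and as a bonus your~$k=\varphi(g)$ is independent of~$a$, which is slightly stronger than what the statement asks for. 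You also invoke the~$\Z[t^{\pm 1}]$-linearity of~$\widetilde f_*$ up front to reduce to~$\ell=0$, whereas the paper postpones that observation to the proof of Proposition~\ref{thm:InfiniteAlgebra}. Both routes are fine; yours is marginally more efficient.
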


\begin{proof}
As in Construction~\ref{cons:Isomorphism},  we can represent any element of~$A[t^{\pm 1}]$ as a sum of~$\theta(p)\varphi(g)$, where~$p \in \pi_1(N)$ and~$g \in \pi_1(N) \ast \Z$.
We will describe~$f_*(\theta(p)\varphi(g))$.

In fact,  since we have the following commutative diagram of isomorphisms
\[\xymatrix{A[t^{\pm 1}] \ar[r]_-{\Psi}^-{\cong} \ar[d]_{f_*}^{\cong} & H_1(\ker(\varphi)) \ar[d]_{f_*}^{\cong} \\
A[t^{\pm 1}] \ar[r]_-{\Psi}^-{\cong} & H_1(\ker(\varphi)),}\]
it is equivalent to describe~$\Psi^{-1} \circ f_* \circ \Psi (\theta(p)\varphi(g))$.
First, the definition of~$\Psi$ implies that $\Psi (\theta(p)\varphi(g))=[gpg^{-1}] \in H_1(\ker (\varphi))$.
Applying~$f_*$, we then obtain~$[f_*(g)f_*(p)f_*(g)^{-1}] \in H_1(\ker (\varphi))$.

But now, under an isomorphism~$\pi_1(N) \ast \Z \xrightarrow{\cong} \pi_1(N) \ast \Z$, every element of~$\pi_1(N)$ is sent to an element of finite order, since~$\pi_1(N)$ is finite.
This implies that for every~$p \in \pi_1(N)$, we have that~$f_*(p) = hp'h^{-1} \in \pi_1(N) \ast \Z$ for some~$p' \in \pi_1(N)$ and some~$h \in \pi_1(N) \ast \Z$. This follows by considering the cyclic subgroup generated by~$f_*(p)$ and applying the Kurosh subgroup theorem, which implies that a finite subgroup of a free product of nontrivial groups is a conjugate of a finite subgroup of one of the factors.

Next, since~$f_*$ is an isomorphism, ~$[f_*(p)] = [hp'h^{-1}]$ has the same order as~$[p]$ in $H_1(\ker(\varphi))$.
Since they are conjugate, in $\pi_1(N) \ast \Z$, we know that $hp'h^{-1}$ and $p'$ have the same order.
We claim that $[hp'h^{-1}]$ has the same order as~$[p']$ in $H_1(\ker (\varphi))$.

To prove the claim, suppose that $\ord([p']) = k$. Then $[(p')^k] = 0 \in H_1(\ker (\varphi))$, i.e.\ $(p')^k \in \ker (\varphi)^{(1)}$.  Since $\ker (\varphi)$ is normal, for every $x \in \ker (\varphi)$ we have that $hxh^{-1} \in \ker (\varphi)$, and therefore since $h[x,y]h^{-1} = [hxh^{-1},hyh^{-1}]$,  for every $z \in \ker (\varphi)^{(1)}$ we have that $hzh^{-1} \in \ker (\varphi)^{(1)}$.  Thus $h(p')^kh^{-1} = (hp'h^{-1})^k \in \ker(\varphi)^{(1)}$, and therefore $\ord([hp'h^{-1}]) \leq k = \ord([p'])$.  Since $p'$ is also a conjugate of  $hp'h^{-1}$, by symmetry we also have $\ord([p']) \leq \ord([hp'h^{-1}])$, and so we have equality. This completes the proof of the claim.

The claim implies that in $H_1(\ker (\varphi))$ we have \[\ord ([p']) = \ord([hp'h^{-1}]) = \ord([f_*(p)]) = \ord ([p]).\]
Returning to the main arc of the proof, so far we have
\[
f_* \circ \Psi (\theta(p)\varphi(g))=[f_*(g)f_*(p)f_*(g)^{-1}] = [f_*(g)hp'h^{-1}f_*(g)^{-1}]
\]
and it remains to apply~$\Psi^{-1}$.
The effect of $\Psi^{-1}$ is~$\theta(p')\varphi(h)\varphi(f_*(g)) \in A[t^{\pm 1}]$.
Since~$f \in \hAut_{\varphi}^+(Y)$, we have~$\varphi \circ f_*=\varphi$ and therefore
\[f_*(\theta(p)\varphi(g))
=\Psi^{-1} \circ f_* \circ \Psi (\theta(p)\varphi(g))
=\theta(p')\varphi(h)\varphi(g) \in A[t^{\pm 1}].\]
We can now calculate~$f_*(at^\ell)$.
Pick~$g \in \pi_1(N) \ast \Z$ and~$p \in \pi_1(N)$ such that we have $\varphi(g)=t^{\ell}$ and~$\theta(p)=a$.
Now $f_*(at^\ell)=f_*(\theta(p)\varphi(g))=\theta(p')\varphi(h)t^\ell$, so the lemma follows by writing~$\varphi(h)=t^k$ and~$a':=\theta(p')$.
Then since~$[p']$ has the same order as~$[p]$, it follows that~$a'$ has the same order as~$a$.
\end{proof}

We can now prove the main result of this section.

\begin{proposition}
\label{thm:InfiniteAlgebra}
Fix an odd prime $q$.
For~$M=X_{2q}(U) \natural (S^1 \times D^3)$, the set\purple{s}
\begin{align*}
&\Aut (\Bl_{\partial M})/( \Aut(\lambda_M) \times \hAut_{\varphi}^+(\partial M) ) \text{ \purple{and} }\\
&\purple{\Aut (\Bl_{\partial M}, \mu_{\Bl_{\partial M}})/( \Aut(\lambda_M) \times \hAut_{\varphi}^{+,\quadr}(\partial M) )}
\end{align*}
\purple{are} countably infinite.
\end{proposition}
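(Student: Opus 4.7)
The plan is to assemble the preparatory results of this section. By Lemma~\ref{lem:Quad=Sym} together with Remark~\ref{rem:DontHaveToWorkTooHard}, it suffices to prove that $\Aut(\Bl_{\partial M})/(\Aut(\lambda_M) \times \hAut_{\varphi}^+(\partial M))$ is countably infinite: if $\hAut_{\varphi}^+(\partial M)$ acts on $\Aut(\Bl_{\partial M})/\Aut(\lambda_M)$ by multiplication by elements of the form $\pm t^n$, then the same is true for the restricted action of $\hAut_\varphi^{+,\quadr}(\partial M)$ on the quadratic quotient, and multiplication by $\pm t^n$ is trivial there.

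First I would identify the coset space $\Aut(\Bl_{\partial M})/\Aut(\lambda_M)$. Combining Proposition~\ref{prop:BoundaryLinkingForm3Manifold}, Lemma~\ref{lem:Rank1} and Proposition~\ref{ex:UnitaryUnitsZZ} applied to the rank one form $\lambda_M \cong (2q)$ yields an isomorphism
\[\Aut(\Bl_{\partial M})/\Aut(\lambda_M) \cong U(\Z[t^{\pm 1}]/2q)/U(\Z[t^{\pm 1}]) \cong \Z,\]
where the class of $n \in \Z$ is represented by multiplication by $u_n := (q{-}1)t^n + q$ on $\Z[t^{\pm 1}]/2q$. In particular this set is already countably infinite.

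Next I would analyse the action of $\hAut_{\varphi}^+(\partial M)$. Since $\partial M = L(2q,1) \# (S^1 \times S^2)$ fits the framework of the preceding paragraphs with $N = L(2q,1)$ and $A = \Z/2q$, Proposition~\ref{prop:im(H_*)small} applied to $f \in \hAut_\varphi^+(\partial M)$ shows that the induced $\Z[t^{\pm 1}]$-linear map $\widetilde{f}_*$ on $H_1(\partial M;\Z[t^{\pm 1}]) \cong \Z[t^{\pm 1}]/2q$ sends the generator $1$ to some $a t^k$ with $a \in \Z/2q$ of order $2q$, i.e.\ $a \in U(\Z/2q)$. Hence $\widetilde{f}_*$ is multiplication by $at^k \in U(\Z[t^{\pm 1}]/2q)$, and it acts on the quotient above by left multiplication by this element.

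The main step, and the only genuine input beyond the algebra already set up in the section, is to conclude that $a = \pm 1$. Because $\widetilde{f}_*$ is an isometry of $\Bl_{\partial M}$, the element $at^k$ is a unitary unit of $\Z[t^{\pm 1}]/2q$, which forces $a^2 \equiv 1 \pmod{2q}$. Since $q$ is an odd prime, the Chinese Remainder Theorem gives $\Z/2q \cong \Z/2 \times \Z/q$, and in each factor the only square roots of $1$ are $\pm 1$, so $a \equiv \pm 1 \pmod{2q}$. Therefore $\widetilde{f}_*$ corresponds to multiplication by $\pm t^k \in U(\Z[t^{\pm 1}])$, and the action of $f$ on $\Aut(\Bl_{\partial M})/\Aut(\lambda_M)$ is trivial. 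It follows that $\Aut(\Bl_{\partial M})/(\Aut(\lambda_M) \times \hAut_\varphi^+(\partial M)) \cong \Z$ is countably infinite, and Remark~\ref{rem:DontHaveToWorkTooHard} together with Lemma~\ref{lem:Quad=Sym} transfers this infiniteness to the quadratic version, completing the proof.
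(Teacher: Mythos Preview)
Your proposal is correct and follows essentially the same route as the paper: identify $\Aut(\Bl_{\partial M})/\Aut(\lambda_M)\cong U(\Z[t^{\pm 1}]/2q)/U(\Z[t^{\pm 1}])\cong\Z$ via Lemma~\ref{lem:Rank1} and Proposition~\ref{ex:UnitaryUnitsZZ}, use Proposition~\ref{prop:im(H_*)small} and the isometry condition to see that every $f\in\hAut_\varphi^+(\partial M)$ acts by multiplication by some $\pm t^k$, and then invoke Lemma~\ref{lem:Quad=Sym} and Remark~\ref{rem:DontHaveToWorkTooHard} for the quadratic statement. The only cosmetic difference is that you place the reduction to the symmetric case at the start rather than the end.
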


\begin{proof}
Fix identifications~$\pi_1(M)=\Z$ and~$(H_2(M;\Z[t^{\pm 1}]),\lambda_M)=(\Z[t^{\pm 1}],\lambda_{2q})$.  Lemma~\ref{lem:Rank1} implies that~$ \Aut (\Bl_{\partial M})/ \Aut(\lambda_M)=U(\Z[t^{\pm 1}]/2q)/U(\Z[t^{\pm 1}])$.
We know from Proposition~\ref{ex:UnitaryUnitsZZ} that~$U(\Z[t^{\pm 1}]/2q)/U(\Z[t^{\pm 1}]) \cong \Z$,  every element of which is of the form~$(q{-}1)t^n+q$ with~$n \in \Z$.
We will now show that there is a bijection of sets
$$ \Aut (\Bl_{\partial M})/( \Aut(\lambda_M) \times \hAut_{\varphi}^+(\partial M) ) \cong \Z.$$
In the notation of Proposition~\ref{prop:im(H_*)small}, we have~$N=L(2q,1)$ with~$\pi_1(L(2q,1)) \cong \Z/2q$ as well as~$A= H_1(\pi_1(N)) = \pi_1(N) = \Z/2q$.

Using Proposition~\ref{prop:im(H_*)small}, we will argue that any automorphism of the group~$H_1(\partial M;\Z[t^{\pm 1}]) \cong (\Z/2q)[t^{\pm 1}]$ induced by a homotopy equivalence~$f \in \Aut^+_\varphi(\partial M)$ is of the form~$p(t) \mapsto \pm t^k p(t)$, for some~$k \in \Z$.
To see this,  given~$p(t) \in (\Z/2q)[t^{\pm 1}]$,  by~$\Z[t^{\pm 1}]$-linearity of~$f_*$ we have  $f_*(p(t))=p(t)f_*(1)$.
By Proposition~\ref{prop:im(H_*)small}, $f_*(1) = a \cdot t^k$, for some $k \in \Z$ and some $a \in \Z/2q$.  We need to show that $a = \pm 1$.
Since $f_*$ is an isometry of $\Bl_{\partial M}$,  we also know that $a^2=1 \in \Z[t^{\pm 1}]/2q$; this holds because
\[\smfrac{-1}{2q} = \Bl_{\partial M}(1,1) = \Bl_{\partial M}(f_*(1),f_*(1)) = \Bl_{\partial M}(a\cdot t^k,a \cdot t^k) = \smfrac{-a^2}{2q} \in \Q(t)/\Z[t^{\pm 1}], \]
which implies that $a^2 =1 \in \Z[t^{\pm 1}]/2q$.  Then since $a \in \Z/2q$ we have that $a^2 =1 \in \Z/2q$.
Here we used that $\Bl_{\partial M} \cong -\partial \lambda_{2q}$ to compute the Blanchfield form~\cite[Proposition 3.5]{ConwayPowell}.

However the only elements of~$A=\Z/2q$ with $a^2=1$ are~$ \pm 1 \in \Z/2q$. Indeed such an $a$ belongs to $U(\Z/2q)\cong U(\Z/q) \times U(\Z/2)$. However $U(\Z/2)$ is trivial, so in fact $U(\Z/2q)\cong U(\Z/q)$.  We will show that $U(\Z/q)=\lbrace \pm 1\rbrace$. To see this, recall that for $q$ an odd prime the units $(\Z/q)^{\times}$ is a cyclic group of order $q-1$, and in such a group there is precisely one element of order 2.  Taken together with the trivial element there are therefore precisely two solutions to $x^2=1 \in (\Z/q)^{\times}$, namely $\pm 1$. So we see that $U(\Z/2q) \cong U(\Z/q)= \{\pm 1\}$.
It follows that $a = \pm 1$ and
$$f_*(p(t))=p(t)f_*(1)=\pm t^k p(t),$$
as asserted above.
In particular, observe that the action of a homotopy equivalence $f \in \hAut^+_\varphi(\partial M)$ is the same as the action by an element of $\Aut(\lambda_M) \cong U(\Z[t^{\pm 1}])$.
We deduce that
\begin{align*}
\Aut (\Bl_{\partial M})/( \Aut(\lambda_M) \times \hAut_{\varphi}^+(\partial M) ) \cong \Aut (\Bl_{\partial M})/\!\Aut(\lambda_M).
\end{align*}
But in Proposition~\ref{ex:UnitaryUnitsZZ} we computed the latter set to be
\[\Aut (\Bl_{\partial M})/\!\Aut(\lambda_M) \cong U(\Z[t^{\pm 1}]/2q)/U(\Z[t^{\pm 1}]) \cong \Z.\]
The inverse of these isomorphisms sends $n \in \Z$ to the automorphism given by multiplying by~$(q{-}1)t^n+q$.
%
\color{black}
Since the action of an element of $\hAut^+_\varphi(\partial M)$ is the same as the action by an element $\Aut(\lambda_M) \cong U(\Z[t^{\pm 1}])$, the same can be said for elements of $\hAut^{+,\quadr}_\varphi(\partial M) \subseteq \hAut^+_\varphi(\partial M)$.
As we mentioned in Remark~\ref{rem:DontHaveToWorkTooHard}, Lemma~\ref{lem:Quad=Sym} now implies that
\[ \Aut (\Bl_{\partial M}, \mu_{\Bl_{\partial M}})/( \Aut(\lambda_M) \times \hAut_{\varphi}^{+,\quadr}(\partial M) ) \cong  \Aut (\Bl_{\partial M})/( \Aut(\lambda_M) \times \hAut_{\varphi}^+(\partial M) ).\]
The second assertion in  Proposition~\ref{thm:InfiniteAlgebra} therefore follows from the first.
\color{black}
\end{proof}

%
%
%


\bibliographystyle{alpha}
\bibliography{BiblioInfinite}

\end{document}